\newtheorem{theorem}{Theorem}[section]
\newtheorem{proposition}[theorem]{Proposition}
\newtheorem{corollary}[theorem]{Corollary}
\newtheorem{lemma}[theorem]{Lemma}
\theoremstyle{definition}
\newtheorem{definition}[theorem]{Definition}
\newtheorem{remark}[theorem]{Remark}
\newtheorem{notation}[theorem]{Notation}
\theoremstyle{problem}
\newcommand{\Aut}{\mathrm{Aut}}
\newcommand{\proj}{\mathrm{proj}}
\newcommand{\RR}{\mathbf{R}}
\newcommand{\ZZ}{\mathbf{Z}}
\newcommand{\NN}{\mathbf{N}}
\newcommand{\Min}{\mathrm{Min}}
\newcommand{\bd}{\partial}
\newcommand{\id}{\operatorname{id}}
\newcommand{\T}{\operatorname{\mathcal{T}}}
\newcommand{\G}{\operatorname{\mathbb{G}}}
\newcommand{\dist}{\operatorname{d}}
\newcommand{\Sym}{\operatorname{Sym}}
\newcommand{\sign}{\operatorname{sign}}
\newcommand{\spn}{\operatorname{span}}
\title{Parabolically induced unitary representations \\of the universal group $U(F)^+$ are $C_0$}
\author[1]{Corina Ciobotaru\thanks{corina.ciobotaru@gmail.com}}
\affil[1]{Universit\'e de Fribourg, Section de Math\'ematiques, Chemin du Mus\'{e}e 23, 1700 Fribourg, Switzerland.}
\date{November 21, 2018}
\begin{document}

\newcounter{qcounter}

\maketitle

\begin{abstract}
We prove that all parabolically induced unitary representations of the Burger--Mozes universal group $U(F)^{+}$, with $F$ being primitive, are $C_0$. This generalizes the same well-known result for the universal group $U(F)^{+}$, when $F$ is $2$--transitive.
\end{abstract}

\section{Introduction}
Let $\mathbf{\T}$ be a $d$-regular tree, with $d \geq 3$. Let $\G:= U(F)^+ \leq \Aut(\T)$, with $F \leq \Sym\{1,\cdots, d\}$ being primitive, be the universal group introduced by Burger and Mozes in~\cite[Sec.~3]{BM00a}. Given a (strongly continuous) unitary representation  $\pi :\G \to \mathcal{U}(\mathcal{H}) $ on a (infinite dimensional) complex Hilbert space $(\mathcal{H}, \left\langle \cdot , \cdot \right\rangle)$, we are interested in studying the matrix coefficients $c_{v,w} : \G \to \mathbb{C}$  given by $c_{v,w}(g):=\left\langle \pi(g)v ,w \right\rangle$, for every $v,w \in \mathcal{H}$. We say $(\pi, \mathcal{H})$ is a $C_0$ unitary representation of $\G$ if for any of its associated matrix coefficients $c_{v,w}$, the subset $\{g \in \G \; \vert \; \vert c_{v,w}(g)\vert \geq \epsilon \}$ is compact in $\G$, for every $\epsilon >0$; equivalently,  $\lim\limits_{g \to \infty} \vert c_{v,w}(g) \vert =0$, for every $v, w \in \mathcal{H}$, where $\infty$ represents the one-point compactification of the locally compact group $\G$. 

It is a general fact \cite[Appen.~C, Prop.~C.4.6]{BHV} the left regular unitary representation of $\G$ is $C_0$. When  $F$ is $2$--transitive (if and only if $\G$ is $2$--transitive on the boundary $\partial \T$) \cite{BM00a} showed the Howe--Moore property of $\G$: every unitary representation of $\G$, without non-zero $\G$-invariant vectors, is $C_0$. Still, for $F$ being just primitive, but not $2$--transitive, it is difficult to predict when a (non-trivial) unitary representation of $\G$ is $C_0$ or not. Apart from \cite[Prop.~C.4.6]{BHV} and the general criterion proven in~\cite{Cio} (and the references therein) providing a unified proof of the Howe--Moore property for all known examples, there are no other known techniques to prove a unitary representation of a locally compact group is $C_0$.

\cite{Cio_rel}) shows  $\G$, when $F$ is primitive, has a weakening of the Howe--Moore property, namely the relative Howe--Moore property with respect to any \textbf{horospherical stabilizer} $\G_{\xi}^{0}:=\{g \in  \G \; \vert \; g(\xi)=\xi, \; g \text{ elliptic } \}$ with $\xi \in \bd \T$. This relative property was introduced and studied in~\cite{CCL+}. Another result of~\cite{Cio_rel} shows when $F$ is primitive but not $2$-transitive, the stabilizer $\G_v$ in $\G$ of a non-zero vector $v \in \mathcal{H}$ of a unitary representation $(\pi,\mathcal{H})$ of $\G$ without non-zero $\G$-invariant vectors, either is compact or if it is not compact then it equals $\G_{\xi}^{0}$, for some $\xi \in \bd \T$.  It is then natural to ask whether the unitary representations of $\G$ induced from closed subgroups of the stabilizer $\G_\xi:=\{g \in  \G \; \vert \; g(\xi)=\xi \}$ for $\xi \in \bd \T$ are $C_0$ or not.  The following vanishing result gives the answer to this question. For  the theory of  induced unitary representations we use notation and the results from~\cite[Appen.~B,~E]{BHV}.

\begin{theorem}
\label{mainthm::vanishing_general_case}
Let $F \leq \Sym\{1,\cdots, d\}$ be primitive and $\xi \in \bd \T$. Let $H$ be a closed subgroup of $\G$ stabilizing $\xi$  and let $(\sigma, \mathcal{K})$ be a unitary representation of $H$. Then the induced unitary representation $(\pi_{\sigma}, \mathcal{H_{\sigma}})$ on $\G$ is $C_0$.
\end{theorem}

We emphasise  Theorem~\ref{mainthm::vanishing_general_case} covers the known case when $F$ is $2$--transitive (that case being covered by the Howe--Moore property). Still, the proof of Theorem~\ref{mainthm::vanishing_general_case}  is very different from the general one proving the Howe--Moore property. This is firstly, because the group $\G$, when $F$ is primitive but \textit{not} $2$--transitive, does not verify the general criterion given in~\cite{Cio}. Secondly, if $F$ is primitive but \textit{not} $2$--transitive, it is a direct consequence  the quotient $\G / \G_{\xi}$ is \textit{not} compact anymore and \textit{not} isomorphic to the boundary $\bd \T$, for any choice of $\xi \in \bd \T$. Moreover, by~\cite{Cio_rel}, $ \G_{\xi}$ is a closed, still non-compact subgroup, for every $\xi \in \bd \T$. 

To prove Theorem \ref{mainthm::vanishing_general_case} we follow the lines of the standard argument that the left regular unitary representation is $C_0$. The novelty of the article is the control of the integral given by \ref{rem::evaluate_rho_function} in terms of indices of subgroups and we distinguish three cases in the calculation of the asymptotics of that integral.

\section{Some properties of $\G$}
\label{sec::univ_group}

For the definition and the main properties of the Burger--Mozes universal groups the reader can consult Burger--Mozes~\cite{BM00a},  Amann~\cite{Amann}, Ciobotaru \cite{Cio_par}.

To fix the notation, let $\dist_{\T}(\cdot, \cdot)$ be the usual metric on $\T$. Let $\Aut(\T)^{+}$ be the group of all type-preserving automorphisms of $\T$ and by definition $\G \leq \Aut(\T)^{+}$.
 For every two points $x,y \in \T \cup \bd \T$,  $[x,y]$ is the unique geodesic between $x$ and $y$ in $\T \cup \bd \T$. For $G \leq \Aut(\T)$ and $x, y \in \T\cup \bd \T$ we define $G_{[x,y]}:=\{ g \in G \; \vert \; g \text{ fixes pointwise}$ $\text{the geodesic } [x,y]\}.$ In particular, $G_{x}:= \{g \in G \; \vert \; g(x)=x\}$. For $\xi \in \bd \T$ we have already defined $G_{\xi},G_{\xi}^{0}$. Note $G_{\xi}$ can contain hyperbolic elements; if this is the case then $G_{\xi}^{0} \lneq G_{\xi}$. If $H \leq G_{\xi}$ and $x \in \T$ then $H_x$ evidently equals $H_{[x, \xi]}$. For a vertex $x \in \T$ and an edge $e$ in the star of $x$, set $K:=G_{x}$ and let $\T_{x,e}$ be the half-tree of $\T$ emanating from the vertex $x$ and containing the edge $e$. For a hyperbolic element $\gamma \in \Aut(\T)$, we denote $\vert \gamma \vert:= \min_{x \in \T}\{ \dist_{\T}(x, \gamma(x))\}$, which is called the translation length of $\gamma$. Set $\Min(\gamma)=\{x \in \T \; \vert \;  \dist_{\T}(x, \gamma(x))$ $=\vert \gamma \vert\}$. 

\begin{remark}
\label{rem::existence_hyp_elements}
As $F$ is primitive, given an edge $e' \in E(\T) $ at odd distance from $e$, one can construct, using the definition of $\G$, a hyperbolic element in $\G$ translating $e$ to $e'$. Moreover, every hyperbolic element in $\G$ has even translation length, as $\G$ has only type-preserving automorphisms.
\end{remark}

\begin{lemma}($KA^{+}K$ decomposition)
\label{lem::KAK_decomposition}
Let $F$ be primitive. Let $x \in \T$ be a vertex and $e$ an edge in the star of $x$. Then $\G$ admits a $KA^{+}K$ decomposition, where $A^{+}:=\{\gamma \in \G \; \vert \;  e \subset \Min(\gamma), \gamma(e) \subset  \T_{x,e}  \} \cup \{\id\}.$
\end{lemma}

\begin{proof}
Let $g \in \G$. If $g(x)=x$, then $g \in K$. If not, consider the geodesic segment $[x, g(x)]$ in $\T$; denote by $e_1$ the edge of the star of $x$ belonging to $[x, g(x)]$. By type-preserving, $[x, g(x)]$ has even length. As $F$ is also transitive, there is $k \in K$ with $k(e_1)=e$; therefore, $kg(x) \in \T_{x,e}$. By~\ref{rem::existence_hyp_elements}, there is a hyperbolic element  $\gamma \in \G$ of translation length equal to the length of $[x, g(x)]$, translating the edge $e$ inside $\T_{x,e}$ and with $\gamma(x)=kg(x)$; thus $\gamma^{-1}kg \in K$. Note the $KA^{+}K$ decomposition of an element $g \in \G$ is not unique.
\end{proof}


\begin{lemma}
\label{lem::infinit_index}
Let $F$ be primitive and let $H$ be a closed, non-compact and proper subgroup of $\G$.  Then, for every $x \in \T$, $H_x$ does not have finite index in $\G_x$.
\end{lemma}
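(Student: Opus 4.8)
The plan is to argue by contradiction, turning the algebraic finite-index hypothesis into a topological openness statement and then invoking the characterization of proper open subgroups of $\G$ supplied by Proposition~\ref{prop::open_subgroups}. So I would fix a vertex $x \in \T$ and suppose, aiming at a contradiction, that $H_x := H \cap \G_x$ has finite index in $\G_x$.

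First I would observe that $H_x$ is closed in $\G$, being the intersection of the two closed subgroups $H$ and $\G_x$. The key elementary fact is then that a closed subgroup of finite index is automatically open: its complement inside $\G_x$ is a finite union of cosets $g_i H_x$, each the image of the closed set $H_x$ under a homeomorphism and hence closed, so the complement is closed and $H_x$ is open in $\G_x$. Since vertex stabilizers are compact open subgroups of $\G$ in the permutation topology, $\G_x$ is open in $\G$, and therefore $H_x$ is open in $\G$.

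Next, because $H$ contains the open subgroup $H_x$, it is a union of cosets of $H_x$ and so is itself open in $\G$. Thus $H$ is a \emph{proper} open subgroup of $\G$, and since $F$ is primitive, Proposition~\ref{prop::open_subgroups} forces $H$ to be compact. This contradicts the hypothesis that $H$ is non-compact, and the contradiction shows that $H_x$ cannot have finite index in $\G_x$, for any $x \in \T$.

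I do not expect a genuine obstacle here: the argument is short and each hypothesis is used exactly once — closedness of $H$ to make $H_x$ closed (so that finite index yields openness), properness together with primitivity to apply Proposition~\ref{prop::open_subgroups}, and non-compactness to close the contradiction. The only point I would state carefully is the standard lemma that a closed finite-index subgroup of a topological group is open, since this is precisely what converts the purely algebraic finite-index condition into the topological openness that feeds into Proposition~\ref{prop::open_subgroups}.
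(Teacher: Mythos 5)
Your proof is correct and follows essentially the same route as the paper: both arguments convert the finite-index hypothesis into openness of $H_x$ (closed finite-index subgroups are open), deduce that $H$ is open in $\G$, and then contradict Proposition~\ref{prop::open_subgroups} using primitivity, properness, and non-compactness. The only difference is cosmetic ordering of when the proposition is invoked.
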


\begin{proof}
By Caprace--De Medts~\cite[Prop.~4.1]{CaMe11} the subgroup $F$ is primitive if and only if every proper open subgroup of $\G$ is compact.  $H$ cannot be an open subgroup of $\G$, as otherwise $H$ would be compact, contradicting the hypothesis. Suppose there is $x \in \T$ with $[\G_x: H_x] < \infty$. As $H_x$ is closed in $\G_x$ and of finite index, $H_x$ is open in $\G_x$ and so also in $\G$. This means $H$ is open in $\G$, obtaining a contradiction. 
\end{proof}

\section{Induced unitary representations}
\label{sec::induced_unitary_rep}
 

We follow~\cite[Appendices~B and~E]{BHV} where all the definitions, notation, proofs and complementary definitions can be found (see also~\cite{Cio_par}). Fix in this section $G$ to be a locally compact group and $H \leq G$ a closed subgroup. All Haar measures used in this paper are considered to be left invariant. We denote by $dx$, respectively, $dh$ the Haar measure on $G$, respectively, $H$. We endow $G/H$ with the quotient topology: the canonical projection $p :G \to G/H$ is continuous and open.  

\begin{definition}(See \cite[Appendix~B]{BHV})
\label{def::rho_function}
A \textbf{rho-function} of $(G,H)$ is a continuous function $\rho : G \to \RR^{*}_{+}$ satisfying the equality
\begin{equation}
\label{equ::rho_fct_cond}
\rho(xh)=\frac{\Delta_H(h)}{\Delta_G(h)} \rho(x) \text{ for all } x \in G, h \in H,
\end{equation}where $\Delta_{G}, \Delta_H$ are the modular functions on $G$, respectively on $H$. 
\end{definition}

By \cite[Thm.~B.1.4]{BHV}, there is a correspondence between rho-functions of $(G,H)$ and continuous $G$--quasi-invariant regular Borel (CGQIRB) measures on $G/H$ (see \cite[Appendix. A.3]{BHV}), where continuous means the Radon--Nikodym derivative of $\mu$ is continuous.
 
\begin{definition}
\label{def::induced_unit_rep}
Let $(\sigma, \mathcal{K})$ be a unitary representation of $H$. Suppose $G/H$ is endowed with a CGQIRB-measure $\mu$, with associated rho-function $\rho$ on $G$. The \textbf{induced unitary representation $(\pi_{\sigma, \mu}, \mathcal{H}_{\sigma, \mu})$} of $G$ is defined as follows. For every $g \in G$, we define the unitary operator $\pi_{\sigma, \mu}(g)$ on $\mathcal{A}$ by $\pi_{\sigma, \mu}(g) (\xi)(x):= \left(\frac{\rho(g^{-1}x)}{\rho(x)}\right)^{1/2} \xi(g^{-1}x),$ where $\xi \in \mathcal{A}$ and $x \in G$, and where $\mathcal{A}$ is a specific dense subset of the Hilbert space $\mathcal{H}_{\sigma, \mu}$. For a complete definition see~\cite[Appendix~E]{BHV}.  Moreover, by~\cite[Prop.~E.1.4]{BHV}, this is a unitary representation of $G$ on the Hilbert space $\mathcal{H}_{\sigma, \mu}$. 
\end{definition}


\begin{remark}
\label{rem::equiv_induced_unit_rep}
By~\cite[Proposition~E.1.5]{BHV} induced unitary representations do only depend on the unitary representations $(\sigma, \mathcal{K})$ of $H$ and not on the CGQIRB-measures on $G/H$. If $(\pi_{\sigma, \mu_{1}}, \mathcal{H}_{\sigma,\mu_{1}})$ is $C_0$ the same is true for $(\pi_{\sigma, \mu_{2}}, \mathcal{H}_{\sigma,\mu_{2}})$. 
\end{remark}

\begin{notation}
\label{notation_induced_rep}
By~\ref{rem::equiv_induced_unit_rep} it is legitimate to denote $(\pi_{\sigma}, \mathcal{H}_{\sigma})$ the unitary representation of $G$ induced from the unitary representation $(\sigma, \mathcal{K})$ of $H$. 
\end{notation}

\begin{lemma}
\label{lem::K-inv_borel_measure}
For every compact subgroup $K$ of $G$ there exists a CGQIRB-measure $\mu$ on $G/H$ which is left $K$--invariant.
 \end{lemma}

\begin{proof}
By~\cite[Thm.~B.1.4]{BHV} let $\mu_1$ be a CGQIRB-measure on $G/H$ with $\rho_1 : G \to \mathbb{R}^*_{+}$ its associated rho-function.  Let $\rho :G \rightarrow \mathbb{R}_{+}^{*}$ be the function defined by $g \in G \mapsto \rho(g):= \int\limits_{K} \rho_1(kg) dk$, with $dk$ the Haar measure on $K$. Then $\rho$ is continuous, satisfies equation~(\ref{equ::rho_fct_cond}) from \ref{def::rho_function} and so $\rho$ is a rho-function and left $K$--invariant. By~\cite[Thm.~B.1.4]{BHV}, let $\mu$ be the CGQIRB-measure on $G/H$ associated with $\rho$.  As the Radon--Nikodym derivative of $\mu$ satisfies $\frac{dy_{\star}\mu}{d\mu}(xH)=\frac{\rho(yx)}{\rho(x)}$, for every $x,y \in G$ and because $\rho$ is left $K$--invariant, we obtain $\mu$ is left $K$--invariant.
\end{proof}

\begin{lemma}
\label{lem::compact_open_quotient}
Let $K \leq G$ be compact. Consider on $G/H$ a CGQIRB-measure $\mu$ which is left $K$--invariant and suppose $\mu(KH) \neq 0$. If $K' < K$ is a compact subgroup of infinite index in $K$, with $H \cap K \leq K'$, then the index of $K'$ in $K$ is uncountable and $\mu(K'H)=0$. In particular, if $H\cap K$ has infinite index in $K$ then the index of $H\cap K$ in $K$ is uncountable and $\mu(H)=0$.
\end{lemma}

\begin{proof}
By~\ref{lem::K-inv_borel_measure}, we know $G/H$ admits a CGQIRB-measure $\mu$ which is left $K$--invariant. By the definition of a regular Borel measure $\mu(KH)< \infty$. Suppose the index of $K'$ in $K$ is countable; there exist $\{k_n\}_{n \in \mathbb{N}} \subset K \setminus K'$ with $K=\bigsqcup_{n \in \mathbb{N}} k_n K'$. Then $KH=\bigsqcup_{n \in \mathbb{N}} k_n K'H$. Indeed, if $k_n K'H \cap k_m K'H \neq \emptyset$ for some $n \neq m$ we would have $k_n k'= k_m k'' h$, for some $h \in H$ and some $k', k'' \in K'$; so $h \in H \cap K \leq K'$ and thus $k_n K'=k_m K'$, which is a contradiction. Therefore, write $\mu(KH)=\sum\limits_{n \in \mathbb{N}} \mu(k_nK'H)= \sum\limits_{n \in \mathbb{N}} \mu(K'H)$, as $\mu$ is countably additive and left $K$--invariant. Because $\mu(K'H), \mu(KH)< \infty$ we conclude $\mu(K'H)$ must be zero and so $\mu(KH)$ is zero too, which contradicts the hypothesis. Therefore, the index of $K'$ in $K$ must be uncountable. 
By the countable additivity of $\mu$, $K$--invariance of $\mu$ and $\mu(KH) \neq 0$, one easily obtains $\mu(K'H)=0$.
\end{proof}

\begin{remark}
\label{rem::radon_nykodym}
 By~\cite[Thm.~B.1.4]{BHV} let $\mu$ be a CGQIRB-measure on $G/H$ with associated rho-function $\rho$. Let $K$ be a compact subset of $G$. Then, for every $g \in G$, $g_{\star}\mu(KH)=\mu(g^{-1}KH) =\int\limits_{G/H}\mathbf{1}_{KH}(xH)dg_{\star}\mu(xH)=$
 
 $= \int\limits_{KH} \frac{\rho(gx)}{\rho(x)}d\mu(xH) \leq C_g \int\limits_{KH}\mathbf{1}_{KH}(x) d\mu(xH)= C_g \cdot \mu(KH)$
 
 where $C_g \geq \max_{x \in K}\{ \frac{\rho(gx)}{\rho(x)}\}$.  
\end{remark}

\begin{lemma}
\label{lem::first_reduction_1}
Let $(\sigma, \mathcal{K})$ be a unitary representation of $H$. Assume the induced unitary representation $(\pi_{\sigma}, \mathcal{H_{\sigma}})$ on $G$ is not $C_0$. Then there exist $\eta_1', \eta_2' \in \spn(\{\xi_{f,v} \; \vert \; f \in  C_{c}(G), v \in \mathcal{K} \})$, $\delta >0$ and a sequence $\{t_{k}\}_{k >0} \subset G$, with $t_{k} \to \infty$, such that $\vert \left\langle \pi_{\sigma}(t_{k})\eta_1', \eta_2' \right\rangle \vert >\delta$, for every $k>0$.
\end{lemma}

\begin{proof} Follows from $\spn(\{\xi_{f,v} \vert f \in  C_{c}(G), v \in \mathcal{K} \})$ is dense in $\mathcal{H_{\sigma}}$. 
\end{proof}

\begin{lemma}
\label{lem::first_reduction_2}
Let $K$ be an open-compact neighborhood in $G$ of the identity. Let $(\sigma, \mathcal{K})$ be a unitary representation of $H$ and $\eta_1, \eta_2 \in \spn(\{\xi_{f,v} \; \vert \; f \in  C_{c}(G), v \in \mathcal{K} \})$. Consider on $G/H$ a CGQIRB-measure $\mu$, with associated rho-function $\rho$ on $G$. 

Then there exist a constant $C>0$, $N_{1}, N_{2} \in \NN$ and elements $\{h_i\}_{i \in \{1,...,N_1\}}$, $\{h_j'\}_{j\in \{1,..., N_2\}} \subset G$, all of them depending only on $\eta_1$ and $\eta_2$, such that

$ \vert \left\langle \pi_{\sigma}(t)\eta_1, \eta_2 \right\rangle \vert=\vert \left\langle \eta_1, \pi_{\sigma}(t^{-1})\eta_2 \right\rangle \vert \leq$

$ \leq \sum_{i,j=1}^{N_{1},N_{2}}  \int\limits_{t(h_{i}KH) \cap h_{j}'KH} \left\vert \left(\frac{\rho(t^{-1}x)}{\rho(x)}\right)^{1/2} \left\langle \eta_1(t^{-1}x), \eta_2 (x )\right\rangle_{\mathcal{K}} d\mu (xH)\right\vert \leq$ 

$\leq  C \sum_{i,j=1}^{N_{1},N_{2}}  \int\limits_{t(h_{i}KH) \cap h_{j}'KH} \left(\frac{\rho(t^{-1}x)}{\rho(x)}\right)^{1/2} d\mu(xH)$

 for every $t \in G$. Moreover, we have 

$\int\limits_{t(h_{i}KH) \cap h_{j}'KH}  \left\vert \left(\frac{\rho(t^{-1}x)}{\rho(x)}\right)^{1/2} \left\langle \eta_1(t^{-1}x), \eta_2 (x )\right\rangle_{\mathcal{K}} d\mu(xH) \right\vert=$ 

$=\int\limits_{h_{i}KH \cap t^{-1}(h_{j}'KH)} \left\vert \left(\frac{\rho(ty)}{\rho(y)}\right)^{1/2} \left\langle \eta_1(y), \eta_2 (ty )\right\rangle_{\mathcal{K}} d\mu(yH) \right\vert.$
\end{lemma}

\begin{proof}
By~\ref{notation_induced_rep} we simply refer to $(\pi_{\sigma, \mu}, \mathcal{H_{\sigma, \mu}})$ as $(\pi_{\sigma}, \mathcal{H_{\sigma}})$.

Let $t \in G$. As $\eta_1, \eta_2 \in \spn(\{\xi_{f,v} \; \vert \; f \in  C_{c}(G), v \in \mathcal{K} \})$, they only depend on a finite number of functions from $C_{c}(G)$. Denote by $A, B \subset G$ the union of the support of those functions  defining $\eta_1$, respectively, $\eta_2$. $A$ and $B$ are compact subsets of $G$. Cover $A$, respectively, $B$, with open sets of the form $hK$, where $ h \in A$, respectively, $ h \in B$. From these open covers extract finite ones covering $A$, respectively, $B$. By making a choice and fixing the notation, consider $A \subset \bigcup\limits_{i=1}^{N_{1}}h_{i}K$ and $B \subset \bigcup\limits_{j=1}^{N_{2}}h_{j}'K$, where $h_i, h_{j}' \in G$ and $N_1, N_2 \in \NN$. We obtain:

\begin{equation}
\label{emu::first_reduction}
\begin{split}
&\vert \left\langle \pi_{\sigma}(t)\eta_1, \eta_2 \right\rangle \vert = \left\vert \text{ } \int\limits_{G/H} \left(\frac{\rho(t^{-1}x)}{\rho(x)}\right)^{1/2} \left\langle \eta_1(t^{-1}x), \eta_2 (x )\right\rangle_{\mathcal{K}} d\mu(xH)\right \vert \\
\end{split}
\end{equation}
\begin{equation*}
\begin{split}
& \leq \sum_{i,j=1}^{N_{1},N_{2}}  \int\limits_{t(h_{i}KH) \cap h_{j}'KH} \left\vert \left(\frac{\rho(t^{-1}x)}{\rho(x)}\right)^{1/2} \left\langle \eta_1(t^{-1}x), \eta_2 (x )\right\rangle_{\mathcal{K}}\right\vert d\mu(xH).\\
\end{split}
\end{equation*}

To obtain the last inequality of the lemma and the constant $C$, we use the following.
Recall $\eta_1, \eta_2 \in \spn(\{\xi_{f,v} \; \vert \; f \in  C_{c}(G), v \in \mathcal{K} \})$. We claim the scalar product $\vert \left\langle \eta_1(t^{-1}x), \eta_2 (x )\right\rangle_{\mathcal{K}} \vert$ is a bounded function in $x \in G$ and this upper-bound depends neither on $t$ nor on the domains $\{t(h_{i}KH) \cap h_{j}'KH\}_{h_i, h_j'}$. Indeed, for simplicity, consider $\eta_1=\xi_{f_1,v_1}$ and $\eta_2=\xi_{f_2,v_2}$, where $f_1,f_2 \in C_c(G)$ and $v_1,v_2 \in \mathcal{K}$. In this case we have: $\vert \left\langle \xi_{f_1,v_1}(t^{-1}x), \xi_{f_2,v_2} (x) \right\rangle_{\mathcal{K}}\vert=$

$=\left\vert \int\limits_{H} \int\limits_{H} \left\langle f_1(t^{-1}x h_1) \sigma(h_1)(v_1), f_2 (xh_2) \sigma(h_2)(v_2)\right\rangle_{\mathcal{K}} dh_1 dh_2 \right\vert \leq $

 $\leq  \int\limits_{H} \int\limits_{H} \vert f_1(t^{-1}x h_1) \vert \cdot \vert f_2 (xh_2) \vert \cdot \vert \vert v_1 \vert \vert_{\mathcal{K}} \cdot \vert \vert v_2\vert \vert _{\mathcal{K}} dh_1 dh_2 < C,$ where $C$ is a constant which does not depend on $t$, but depends on $\eta_1, \eta_2$. From here the conclusion follows. Note the last assertion of the lemma follows using the change of variables $y:=t^{-1}x$ and the positivity of $\rho$.
\end{proof}

\begin{remark}
\label{rem::evaluate_rho_function}
\ref{lem::first_reduction_2} can be used in the following way. In order to show that induced unitary representations are $C_0$, it is enough to evaluate integrals of the form $\int\limits_{t_n(f_1KH) \cap f_2KH} \left(\frac{\rho(t^{-1}_{n}x)}{\rho(x)}\right)^{1/2} d\mu(xH)$, where $f_1, f_2 \in G$ are considered to be fixed and $t_n \to \infty$.
\end{remark}


\begin{lemma}
\label{lem::H_left_cosets}
Let $K \leq G$ be open-compact. Let  $g,f_{1}, f_{2} \in G$ with $g(f_1KH) \cap f_2KH \neq \emptyset$. Then $g(f_1KH) \cap f_2KH= \sqcup_{i \in I}f_{2}k_iH$, for some $\{k_i\}_{i \in I} \subset K/(K \cap H)$ pairwise different. In addition, for every $i \in I$, there is a unique $k_{k_i} \in K/(K \cap H)$ and a unique $h_i \in H$ with $gf_{1}k_{k_i}= f_{2}k_i h_i \in f_{2}k_i H$. 
\end{lemma}
\begin{proof}
Let $x \in gf_{1}KH \cap f_{2}KH$. Then there exist $k,k' \in K$ and $h,h' \in H$, with $x=gf_{1}kh=f_{2}k'h'$; so $xh^{-1}=gf_{1}k=f_{2}k'h'h^{-1}$. By taking $k' \in K/(K \cap H)$ we obtain the first part of the lemma. Suppose there are $k,k' \in K/(K \cap H)$ and $h,h' \in H$ with $k \notin k'H$ and $gf_{1}k= f_{2}k_ih, gf_{1}k'=f_{2}k_ih' \in f_2k_iH$. From here we have $k=k'$ and $h=h'$. Note for $i \neq j \in I$, we might have $h_i=h_j$.
\end{proof}

\begin{lemma}
\label{lem::integral_evaluation}
Let $K \leq G$ be open-compact and $G$ be unimodular. Consider on $G/H$ a CGQIRB-measure $\mu$, with associated rho-function $\rho$ on $G$. Let  $g,f_{1}, f_{2} \in G$. Then there is a constant $C>0$, depending only on $K$, $\rho$ and $f_{1}, f_{2}$ with $$\int\limits_{gf_{1}KH \cap f_{2}KH} \left(\frac{\rho(g^{-1}x)}{\rho(x)}\right)^{1/2} d\mu(xH) \leq C  \int\limits_{\sqcup_{i \in I_{n}}f_{2}k_{i}H} \Delta_{H}(h_i)^{-1/2} d\mu(f_{2}k_iH)$$ where $I,k_i, h_i$ are given by~\ref{lem::H_left_cosets}.
\end{lemma}

\begin{proof}
Suppose $gf_{1}K H\cap f_{2}KH \neq \emptyset$, as otherwise the conclusion is trivial.

By~\ref{lem::H_left_cosets},  $gf_{1}K H\cap f_{2}KH= \sqcup_{i \in I}f_{2}k_iH$, for some $\{k_i\}_{i \in I} \subset K/(K \cap H)$ pairwise different. Let $x \in gf_{1}KH \cap f_{2}KH$. Then, by the same~\ref{lem::H_left_cosets}, $x= gf_{1}k_{k_i}h=f_{2}k_ih_ih$, for some $h \in H$ and some $i \in I$. Therefore, $\left(\frac{\rho(g^{-1}x)}{\rho(x)}\right)^{1/2}= \left(\frac{\rho(f_{1}k_{k_i}h)}{\rho(f_{2}k_ih_ih)}\right)^{1/2}= \left(\frac{\rho(f_{1}k_{k_i})\Delta_{H}(h)}{\rho(f_{2}k_i)\Delta_{H}(h_ih)}\right)^{1/2}$. As the map $\rho$ is continuous on $G$ and $K$ is compact, there exists a constant $C>0$ with $0 < \left(\frac{\rho(f_{1}k)}{\rho(f_{2}k')}\right)^{1/2} \leq C$, for every $k,k' \in K$. We obtain $\left(\frac{\rho(g^{-1}x)}{\rho(x)}\right)^{1/2} \leq C \Delta_{H}(h_i)^{-1/2}$, for $x \in f_{2}k_iH$. The conclusion follows.
\end{proof}

Note for $i \neq j \in I$, so for $f_{2}k_iH, f_{2}k_jH$, one can have $\Delta_{H}(h_i)^{-1/2}= \Delta_{H}(h_j)^{-1/2}$. Therefore, the function $\Delta_{H}(h_i)^{-1/2}$ might be integrated on a bigger subset than $f_{2}k_iH$, and thus on a subset that might not have measure zero. We resume below our general strategy to prove induced unitary representations on locally compact groups are $C_0$. 

\begin{remark}[The strategy: first step]
\label{rem::the_strategy_first_step}

Let $K \leq G$ be open-compact and $G$ be unimodular.  Consider on $G/H$ a CGQIRB-measure $\mu$, with associated rho-function $\rho$ on $G$. By~\ref{lem::K-inv_borel_measure} and~\ref{rem::equiv_induced_unit_rep}, $\mu$ and the associated rho-function $\rho$ are both $K$--invariant. From now on consider fixed these $\mu$ and $\rho$. As $K$ is open-compact, $0 \neq  \mu(KH) < \infty$. Suppose $\mu(KH)=1$. Let $(\sigma, \mathcal{K})$ be a unitary representation of $H$ and denote by $(\pi_{\sigma}, \mathcal{H_{\sigma}})$ the induced unitary representation on $G$. Note we have applied~\ref{rem::equiv_induced_unit_rep} and~\ref{notation_induced_rep}. Assume there exist a sequence $\{t_{n}\}_{n>0} $ of $G$ and $\eta_1,\eta_2 \in \mathcal{H_{\mu}}$ with $t_{n} \rightarrow \infty$ and $\vert \left\langle \pi_{\sigma}(t_{n})\eta_1, \eta_2 \right\rangle \vert \nrightarrow 0$, thus the representation $(\pi_{\sigma}, \mathcal{H_{\sigma}})$ is not $C_0$. To the sequence $\{t_n\}_{n>0}$ apply~\ref{lem::first_reduction_1} and then~\ref{lem::first_reduction_2}. By~\ref{rem::evaluate_rho_function} it is enough to evaluate the integrals $\int\limits_{t_n(f_{1}KH) \cap f_{2}KH} \left(\frac{\rho(t^{-1}_{n}x)}{\rho(x)}\right)^{1/2} d\mu(xH),$ where $f_1, f_2 \in G$ are fixed and $t_n \to \infty$. 

First of all,  fix $t_n$.  Apply~\ref{lem::H_left_cosets} and~\ref{lem::integral_evaluation} to $t_n,f_1,f_2$. One obtains $$\int\limits_{t_n(f_{1}KH) \cap f_{2}KH} \left(\frac{\rho(t^{-1}_{n}x)}{\rho(x)}\right)^{1/2} d\mu(xH) \leq C \int\limits_{\sqcup_{i \in I_{n}}f_{2}k_{i,n}H} \Delta_{H}(h_{i,n})^{-1/2} d\mu(f_{2}k_{i,n}H),$$
where the constant $C>0$ depends only on $K,\rho, f_1,f_2$; the set $I_{n}$ and $k_{i,n}, h_{i,n}$, with $i \in I_{n}$, depend on $t_n, f_1$ and $f_2$.  As noticed above, for $i \neq j \in I_n$, so for $f_{2}k_{i,n}H, f_{2}k_{j,n}H$, one can have $\Delta_{H}(h_{i,n})^{-1/2}= \Delta_{H}(h_{j,n})^{-1/2}$. Therefore, the function $\Delta_{H}(h_{i,n})^{-1/2}$ it might be integrated on a bigger subset than $f_{2}k_{i,n}H$, and thus on a subset that might not have measure zero. We want to  show that integral tends to zero when $n \to \infty$. This would be a contradiction of our assumption  $\vert \left\langle \pi_{\sigma}(t_{n})\eta_1, \eta_2 \right\rangle \vert \nrightarrow 0$.
\end{remark}


\begin{remark}
Let $K\leq G$ be open-compact and $G$ be unimodular. Suppose we have been able to evaluate the intersections $gKH \cap KH$, for $g \in G$.  It would remain to evaluate the values of $\Delta_{H}^{-1/2}$. These values strictly depend on the structure of the group $H$. Because of this, we restrict ourself to the case when $G$ is a closed subgroup of $\Aut(\T)$ and $H$ is a closed subgroup of $\Aut(\T)_{\xi}$,  with $\xi \in \partial \T$. In this case the values of the function $\Delta_{H}^{-1/2}$ are determined by the hyperbolic elements of $H$, the structure of those being very well understood. 
\end{remark}

\section{Vanishing results for the universal group $\G$}
\label{sub::vanishing_results}

In this section we consider parabolically induced unitary representations of the universal group $\G$. Recall by \cite{BM00a} $\G$ is unimodular when $F$ is primitive. We split this study in two parts:  when $H \leq \G_{\xi}$ does not contain hyperbolic elements, and the general case, when $H \leq \G_{\xi}$ does contain hyperbolic elements. By~\cite{Cio_rel} $\G_{\xi}^{0}$ is a closed, non-compact subgroup of $\G$, for every $\xi \in \bd \T$.

\subsection{The non-hyperbolic case}
\label{subsec::unimod_case}

\begin{remark}
\label{rem::H_unimodular}
Let $\xi  \in \bd \T$. If $H \leq \Aut(\T)_{\xi}$ is a closed subgroup not containing hyperbolic elements then $H$ is unimodular. This is because $H$ can be written as a countable union of compact subgroups. Indeed, by \cite{Ti70} as $H$ contains only elliptic elements, each element of $H$ fixes pointwise an infinite geodesic ray of $\T$ with endpoint $\xi$. Thus every element of $H$ is contained in some $H_x$ for some vertex $x$ of $\T$ and $H_x$ is compact (whence unimodular).
\end{remark}

\begin{lemma}
\label{lem::unimod_case_parabolic}
Let $x \in \T$ and $\xi \in \bd \T$. Let $K \leq \Aut(\T)_{x}$ be closed and let $H$ be a closed, non-compact subgroup of $\Aut(\T)_{\xi}$, not containing hyperbolic elements. Let  $g \in \Aut(\T)^{+}$. If $gKH \cap KH \neq \emptyset$, then there exists $k_{g} \in K$ with $gKH \cap KH \subset k_{g}K_{[x, x_{g}]} H,$ where $x_{g} \in [x, \xi)$ has the properties $\dist_{\T}(x,x_{g})=\frac{\dist_{\T}(x, g(x))}{2}$ and $k_{g}$ sends $[x,x_{g}]$ into the first half of the geodesic segment $[x,g(x)]$.
\end{lemma}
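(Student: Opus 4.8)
The plan is to analyze a single element of $gKH \cap KH$ geometrically and read off the claimed containment. Fix $\ell := \dist_\T(x, g(x))/2$; this is an integer because every element of $\G = U(F)^+$ is type-preserving, so $\dist_\T(x, g(x))$ is even, and $x_g$ is then the well-defined vertex of $[x,\xi)$ at distance $\ell$ from $x$. Let $m_g$ denote the midpoint of $[x, g(x)]$, i.e. the vertex at distance $\ell$ from both $x$ and $g(x)$, so that the ``first half'' of $[x, g(x)]$ is the segment $[x, m_g]$.

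The key computation is the following. Take any $w \in gKH \cap KH$ and write $w = kh = gk'h'$ with $k, k' \in K$ and $h, h' \in H$. Setting $\phi := h(h')^{-1} \in H$ and using that $k'$ fixes $x$, I get $g(x) = k\phi(x)$. Since $k \in K$ fixes $x$, this already gives $\dist_\T(x, \phi(x)) = \dist_\T(x, g(x)) = 2\ell$ and shows that $k$ maps the geodesic $[x, \phi(x)]$ isometrically onto $[x, g(x)]$, hence carries the midpoint of $[x, \phi(x)]$ to $m_g$. It remains to locate that midpoint. Here I use that $\phi \in H$ is elliptic, since $H$ contains no hyperbolic elements, and fixes $\xi$: such an element fixes a sub-ray of $[x,\xi)$ pointwise, hence preserves the Busemann function centred at $\xi$, so $x$ and $\phi(x)$ lie on the same horosphere about $\xi$. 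Consequently the geodesic $[x, \phi(x)]$ passes through the confluence point $c$ of the rays $[x,\xi)$ and $[\phi(x), \xi)$, and $c$ is exactly the midpoint of $[x, \phi(x)]$. As $c \in [x,\xi)$ with $\dist_\T(x, c) = \ell$, we have $c = x_g$, and therefore $k([x, x_g]) = [x, m_g]$.

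With this in hand I fix $k_g$ once and for all. Since $gKH \cap KH \neq \emptyset$ by hypothesis, I apply the previous paragraph to one chosen element to produce some $k_g \in K$ with $k_g([x, x_g]) = [x, m_g]$; this is the element whose existence and defining property the statement asserts. Now for an arbitrary $w \in gKH \cap KH$ with associated $k$ as above, both $k$ and $k_g$ carry $[x, x_g]$ onto $[x, m_g]$, so $k_g^{-1}k$ fixes $x$ and maps the segment $[x, x_g]$ onto itself. An isometry fixing an endpoint of a geodesic segment and preserving that segment setwise must fix it pointwise, so $k_g^{-1}k \in \G_{[x, x_g]}$. Hence $w = kh = k_g(k_g^{-1}k)h \in k_g \G_{[x, x_g]} H$, which is the desired inclusion.

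The main obstacle is the geometric claim in the middle paragraph, namely pinning down the midpoint of $[x, \phi(x)]$ on the ray $[x,\xi)$ at distance exactly $\ell$. This is precisely where the hypothesis that $H$ contains no hyperbolic elements is essential: it forces $\phi$ to be elliptic and hence to preserve the horosphere through $x$, which is what makes the midpoint land on $[x,\xi)$ at the predicted height. Everything else is bookkeeping with the two double-coset decompositions of $w$ together with the rigidity of geodesic segments under isometries fixing an endpoint.
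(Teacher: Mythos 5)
Your proof is correct and follows essentially the same route as the paper's: both reduce to solving $g = k'\,h\,k$ with $h \in H$ elliptic fixing $\xi$, observe that the midpoint of $[x,h(x)]$ must lie on $[x,\xi)$ at distance $\dist_\T(x,g(x))/2$ (you via the horosphere/confluence-point argument, the paper via the first vertex of $[x,\xi)$ fixed by $h$ --- the same geometric fact), and conclude that $k'$ carries $[x,x_g]$ onto the first half of $[x,g(x)]$, forcing $k' \in k_g\G_{[x,x_g]}$. Your write-up only adds the (correct) bookkeeping detail of why two such $k'$ differ by an element of $\G_{[x,x_g]}$, which the paper asserts without comment.
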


\begin{proof}
From $gKH \cap KH \neq \emptyset$, $g=k'hk$, for some $h \in H$ and $k',k \in K$. We want to determine the domain in $K$ of the variable $k'$. From $g=k'h k$ we have:
\begin{equation}
\label{eq::equation_0}
\dist_{\T}(x, g(x))=\dist_{\T}(x, k' h(x))= \dist_{\T}(x, h(x)).
\end{equation}

As $h$ is not hyperbolic, denote by $x_{h}$ the first vertex of the geodesic ray $[x, \xi)$ fixed by $h$. From equation~(\ref{eq::equation_0}) we obtain $\dist_{\T}(x,x_{h})=\frac{\dist_{\T}(x, g(x))}{2}$, thus $x_{h}$ is a precise point on the geodesic ray $[x, \xi)$ determined only by the element $g$ and not by the non-hyperbolic element $h$. So take $x_g:=x_h$. Because $k'([x,h(x)])= [x, g(x)]$, $k'$ sends the geodesic segment $[x,x_{g}]$ into the first half of the geodesic segment $[x,g(x)]$. We conclude $k' \in k_{g} K_{[x,x_{g}]}$, where $k_{g} \in K$ is a fixed element sending $[x,x_{g}]$ into the first half of the geodesic segment $[x,g(x)]$. 
\end{proof}

\begin{theorem}
\label{thm::vanishing_unimodular_case}
Let $\xi \in \bd \T$, $x \in \T$, $G$ be a closed, non-compact, unimodular subgroup of $\Aut(\T)^+$ and suppose the index in $K:=G_x$ of $G_{[x,\xi]}$ is infinite. Let $H$ be a closed, non-compact subgroup of $G_{\xi}$, not containing hyperbolic elements and let $(\sigma, \mathcal{K})$ be a unitary representation of $H$. Then the induced unitary representation $(\pi_{\sigma}, \mathcal{H_{\sigma}})$ on $G$ is $C_0$.
\end{theorem}

\begin{proof}

By~\ref{rem::equiv_induced_unit_rep} and because $H, G$ are unimodular, it is enough to consider the case when the rho-function $\rho$ is the constant function $\mathbf{1}$ on $G$. Thus, the measure $\mu$ on $G/H$ associated with the rho-function $\mathbf{1}$ on $G$ is $G$--invariant. As $K$ is open and compact with respect to the locally compact topology on $G$, we have $0 \neq \mu(KH) < \infty$. Assume there exist a sequence $\{t_{n}\}_{n>0} $ of $G$ and $\eta_1,\eta_2 \in \mathcal{H_{\sigma}}$ with $t_{n} \rightarrow \infty$ and $\vert \left\langle \pi_{\sigma}(t_{n})\eta_1, \eta_2 \right\rangle \vert \nrightarrow 0$. To the sequence $\{t_n\}_{n>0}$ apply~\ref{lem::first_reduction_1} and then~\ref{lem::first_reduction_2}. Moreover, by~\ref{rem::evaluate_rho_function} it is enough to evaluate $ \mu(t_n(h_{i}KH) \cap h_{j}'KH)$, where $h_i, h_j'$ are considered to be fixed and $t_{n} \rightarrow \infty$.  Note $ \mu(t_n(h_{i}KH) \cap h_{j}'KH)=\mu((h_{j}')^{-1}t_n h_{i}KH \cap KH)$. 

If $(h_{j}')^{-1}t_n h_{i}KH \cap KH \neq \emptyset $ apply~\ref{lem::unimod_case_parabolic} to $g_n:= (h_{j}')^{-1}t_n h_{i}$. We obtain $g_n KH \cap KH \subset k_{g_n}G_{[x, x_{g_n}]} H,$ where $x_{g_n} \in [x, \xi)$ with one of the properties being $\dist_{\T}(x,x_{g_n})=\frac{\dist_{\T}(x, g_n(x))}{2}$. As $t_n \to \infty$, we also have $g_n \to \infty$ ($h_i, h_j'$ being fixed); in addition, $\dist_{\T}(x,x_{g_n}) \rightarrow \infty $ when $n \rightarrow \infty$. To evaluate $\mu(g_n KH \cap KH)$ it is enough to compute $\mu(k_{g_n}G_{[x, x_{g_n}]} H)= \mu(G_{[x, x_{g_n}]} H)$, where $\dist_{\T}(x,x_{g_n}) \xrightarrow[g_n \to\infty]{}  \infty $.  We claim $\mu(G_{[x, x_{g_n}]} H) \xrightarrow[g_n \to\infty]{}  0$, giving a contradiction. Indeed, there are two cases that should be considered: either for every $y \in (x, \xi)$ the index of $G_{[x,y]}$ in $K$ is finite or there exists $y \in (x, \xi)$ with the index of $G_{[x,y]}$ in $K$ is infinite. Consider the first case; so the index in $K$ of $G_{[x, x_{g_n}]}$ is finite for every $g_n$. Moreover, as $[K: G_{[x, \xi]}]=\infty$, $[K: G_{[x, x_{g_n}]}]  \xrightarrow[g_n \to\infty]{}  \infty$. As $\mu(KH)< \infty$, $\mu$ is $G$--invariant, and so $K$--invariant, the claim follows. Consider the second case; so there exists $N>0$ such that for every $n \geq N$ we have the index of $G_{[x, x_{g_n}]}$ in $K$ is infinite. By~\ref{lem::compact_open_quotient} applied to $K'= G_{[x, x_{g_n}]}$, we have $\mu(G_{[x, x_{g_n}]} H)=0$, for every $n \geq N$. The theorem follows.

\end{proof}

\begin{corollary}
\label{cor::vanishing_unimod_case}
Let $F$ be primitive and let $\xi \in \bd \T$. Let $H$ be a closed, non-compact subgroup of $\G_{\xi}$, not containing hyperbolic elements and let $(\sigma, \mathcal{K})$ be a unitary representation of $H$. Then the induced unitary representation $(\pi_{\sigma}, \mathcal{H_{\sigma}})$ on $\G$ is $C_0$.
\end{corollary}

\begin{proof}
The hypotheses of~\ref{thm::vanishing_unimodular_case} are fulfilled: let $K:=\G_x$ and by~\ref{lem::infinit_index} applied to $\G_{\xi}^{0}$ we have $[K: K_{[x, \xi]}] = \infty$.
\end{proof}

\subsection{The hyperbolic case}
\label{subsec::general_case}

Let $\xi \in \bd \T$. In this subsection we consider $H$  a closed subgroup of $\Aut(\T)_{\xi}$ containing hyperbolic elements. This implies $H$ is not compact. 

\subsubsection{Structure and modular function of parabolic subgroups}

\begin{lemma}
\label{lem::hyp_H}
Let $\xi \in \bd \T$ and $H \leq \Aut(\T)_{\xi}$ be a closed subgroup containing hyperbolic elements. Then there exists a hyperbolic element $\gamma \in H$, of attracting endpoint $\xi$, that is minimal, in the sense any other hyperbolic element $\gamma' \in H$ is written $\gamma' =\gamma^{n} h$, where $n \in \mathbb{Z}$, $\vert n\vert \vert \gamma \vert =\vert \gamma' \vert$ and $h \in H \cap \Aut(T)_{\xi}^{0}$. 
\end{lemma}

\begin{proof}
 Let $Hyp(H):=\{\gamma \in H \; \vert \; \gamma \text{ is hyperbolic}\}$. Let $hyp_{H}:= \min_{\gamma \in H}(\vert \gamma \vert)$. Note $hyp_{H}$ exists and $hyp_{H} \geq 1$. Let fix $\gamma \in H$ with $\vert \gamma \vert = hyp_{H}$. Fix also a vertex $x$ in $\Min(\gamma)$. Moreover, consider the attracting endpoint of $\gamma$ is $\xi$; if not take $\gamma^{-1}$. Let $\gamma' \in Hyp(H)$ and let $x_{\gamma'}$ be the first vertex of $[x, \xi)$ contained in $\Min(\gamma')$. By minimality $\vert \gamma' \vert $ is a multiple of $\vert \gamma \vert$. If the attracting endpoint of $\gamma'$ is $\xi$, then $\gamma^{-n}\gamma'(x_{\gamma})=x_{\gamma'}$, where $n \vert \gamma \vert= \vert \gamma' \vert$. Thus, $\gamma' =\gamma^{n} h$, where $h \in H_{x_{\gamma'}}$. If $\gamma'$ has $\xi$ as a repelling endpoint, then $\gamma^{n} \gamma' ((\gamma')^{-1}(x_{\gamma'}))=(\gamma')^{-1}(x_{\gamma'})$, where $n\vert \gamma \vert= \vert \gamma' \vert $. Thus $\gamma' =\gamma^{-n} h$, where now $h$ is in $H_{(\gamma')^{-1}(x_{\gamma'})}$.
\end{proof}

\begin{lemma}
\label{lem::modular_fct_H}
Let $\xi \in \bd \T$ and $H \leq \Aut(\T)_{\xi}$ be a closed subgroup containing hyperbolic elements. Let $\gamma$ be a hyperbolic element of $H$ with attracting endpoint $\xi$ and $x$ be a vertex of $\Min(\gamma)$. Then $ \frac{1}{(d-1)^{\vert \gamma \vert}} \leq\Delta_{H}(\gamma)=\frac{1}{[H_{\gamma(x)}: H_{x}]} \leq1$. In particular, $H$ is unimodular if and only if $H_{x}= H_{y}$, for every $y \in \Min(\gamma)$.
\end{lemma}

\begin{proof}
By~\ref{rem::H_unimodular}, for every $h \in H \cap \Aut(\T)_{\xi}^{0}$, $\Delta_{H}(h)=1$.  Note the following facts. Firstly, $H_{x} = H_{[x, \xi]}  \leq H_{\gamma(x)}$ are compact subgroups and  secondly, the index $[H_{\gamma(x)}: H_{x}] \leq (d-1)^{\vert \gamma \vert}$, where $d$ is the regularity of the tree $\T$. Moreover, $H_{\gamma(x)}= \gamma H_{x} \gamma^{-1}$. Let $dh$ denote the left Haar measure on $H$. Then $dh(H_{\gamma(x)})=dh(\gamma H_{x} \gamma^{-1})= dh(H_{x} \gamma^{-1})=\Delta_{H}(\gamma^{-1}) dh(H_{x}).$ As $\Delta_{H}(h)=1$ for $h \in H \cap \Aut(\T)_{\xi}^{0}$, we have $dh(H_{\gamma(x)})=dh(H_{x}) \cdot [H_{\gamma(x)}: H_{x}]$. From the above two equalities we obtain $1\leq \Delta_{H}(\gamma^{-1})=[H_{\gamma(x)}: H_{x}] \leq (d-1)^{\vert \gamma \vert}.$ In particular, $1=\Delta_{H}(\gamma^{-1})=[H_{\gamma(x)}: H_{x}]$ if and only if $H_{x}= H_{y}$, for every $y \in \Min(\gamma)$; thus $H_{y}= H_{\xi_{-}}$ for every $y \in \Min(\gamma)$, where $\xi_{-}$ is the repelling endpoint of $\gamma$.
\end{proof}

\begin{lemma}
\label{lem::index_fct_H}
Let $\xi \in \bd \T$ and $H \leq \Aut(\T)_{\xi}$ be a closed subgroup containing hyperbolic elements. Let $\gamma$ be a hyperbolic element of $H$ with attracting endpoint $\xi$ and $x$ be a vertex of $\Min(\gamma)$. Then  we have the following properties:
\begin{list}{\arabic{qcounter})~}{\usecounter{qcounter}}
\item
\label{lem::index_fct_H_one}
$[H_{\gamma(x)}: H_{x}]= [H_{\gamma^{2}(x)}: H_{\gamma(x)}]$;
\item
\label{lem::index_fct_H_two}
$[H_{\gamma^{n}(x)}: H_{x}]= [H_{\gamma^{n-m}(x)}: H_{x}] \cdot [H_{\gamma^{m}(x)}: H_{x}]$ for every $0 \leq m \leq  n$;
\item
\label{lem::index_fct_H_three}
$[H_{\gamma^{m}(x)}: H_{x}] \leq [H_{\gamma^{n}(x)}: H_{x}]$ for every $0 \leq m \leq  n$.
\end{list}
\end{lemma}

\begin{proof}
Note assertion~\ref{lem::index_fct_H_three}) is a consequence of assertion~\ref{lem::index_fct_H_two}) and the latter one follows from assertion~\ref{lem::index_fct_H_one}) and from $H_{x} \leq H_{\gamma^{m}(x)} \leq H_{\gamma^{n}(x)}$, for $0 \leq m \leq  n$. The first assertion follows as $H_{\gamma(x)}= \gamma H_{x} \gamma^{-1}$ and $H_{\gamma^{2}(x)}= \gamma H_{\gamma(x)} \gamma^{-1}$. Moreover, for every coset $h H_{x}$ of $H_{\gamma(x)}/H_{x} $ we have $\gamma h \gamma^{-1} \gamma H_{x} \gamma^{-1}$ is a coset of $H_{\gamma^{2}(x)} / H_{\gamma(x)} $ and vice versa. The lemma is proved. \end{proof}

For $F \leq \Sym\{1,\cdots, d\}$ and $e$ an edge of $\T$ we abuse notation and use $F_e $ to denote the stabiliser in $F$ of the colour from $\{1,\cdots, d \}$ of the edge $e$.
\begin{lemma}
\label{lem::hyp_element}
Let $F$ be transitive and let $\gamma \in \G$ be hyperbolic. Denote by $\xi_{+}, \xi_{-} \in \partial \T$ the attracting and respectively, the repelling endpoints of $\gamma$. Take $x \in (\xi_{-}, \xi_{+})$, the edges $e_-,e_+$ in the star of $x$ with $e_+ \in [x, \xi_+), e_- \in (\xi_-,x]$ and $K:=\G_{x}$. Then we have:
\begin{list}{\arabic{qcounter})~}{\usecounter{qcounter}}
\item
\label{lem::hyp_element_one}
$[\G_{[\gamma(x), \xi_{+}]}: \G_{[x, \xi_{+}]}]= \Delta_{\G_{\xi_{+}}}(\gamma^{-1})=\frac{[K: \G_{[x, \gamma^{-1}(x)]}] \cdot k_1}{d}$, where $d$ is the regularity of $\T$ and $k_1$ is the number of orbits of the edge $e_-$ in $\{1, \cdots,d\}$ under the stabilizer subgroup $F_{e_+} \leq F$;
\item
\label{lem::hyp_element_two}
$[\G_{[x, \xi_{-}]}: \G_{[\gamma(x), \xi_{-}]}]= \Delta_{\G_{\xi_{-}}}(\gamma)=\frac{[K: \G_{[x, \gamma(x)]}] \cdot k_2}{d}$  where  $k_2$ is the number of orbits of the edge $e_+$ in $\{1, \cdots,d\}$ under the stabilizer subgroup $F_{e_-} \leq F$;
\item
\label{lem::hyp_element_three}
$[K: \G_{[\gamma^{-1}(x),x]}]=[K: \G_{[x, \gamma(x)]}]= \frac{[\G_{[\gamma(x), \xi_{+}]}: \G_{[x, \xi_{+}]}] \cdot d}{k_1}=$

$= \frac{[\G_{[x, \xi_{-}]}: \G_{[\gamma(x), \xi_{-}]}] \cdot d}{k_2}$.
\end{list}
\end{lemma}

\begin{proof}
First we it is easy to see that 
\begin{equation}
\label{equ::stab_segment}
\gamma \G_{[\gamma^{-1}(x), x]} \gamma^{-1}= \G_{\gamma([\gamma^{-1}(x), x])}= \G_{[x, \gamma(x)]}. 
\end{equation}

Let $m$ be the left Haar measure on $\G$. Then we have 
\begin{equation}
\label{equ::left_right_gamma}
m(K)= m(\G_{[x, \gamma(x)]}) \cdot [K: \G_{[x, \gamma(x)]}]= m(\G_{[\gamma^{-1}(x), x]}) \cdot [K: \G_{[\gamma^{-1}(x), x]}].
\end{equation}

By a standard computation we have $ m(\G_{[x, \gamma(x)]})= \Delta_{\G}(\gamma^{-1}) m(\G_{[\gamma^{-1}(x), x]})$. As $\G$ is unimodular we obtain
\begin{equation}
\label{equ::neg_pos}
\Delta_{\G}(\gamma^{-1}) \cdot [K: \G_{[x, \gamma(x)]}]=[K: \G_{[x, \gamma(x)]}]= [K: \G_{[\gamma^{-1}(x),x]}].
\end{equation}

Let us prove assertion~\ref{lem::hyp_element_one}) of the lemma. First, by~\ref{lem::modular_fct_H} applied to $\G_{\xi_+}$ we have $[\G_{[\gamma(x), \xi_{+}]}: \G_{[x, \xi_{+}]}]= \Delta_{\G_{\xi_{+}}}(\gamma^{-1})=[\G_{[x, \xi_{+}]}: \G_{[\gamma^{-1}(x), \xi_{+}]}]$. As $\gamma$ is translating along the axis $(\xi_-, \xi_+)$, $F_{\gamma(e_+)}$ is isomorphic to $F_{e_+}$, thus the number of $F_{\gamma(e_+)}$-orbits of $\gamma(e_-)$ in $\{1, \cdots, d\}$ is the same as the $F_{e_+}$-orbits of $e_-$ in $\{1, \cdots, d\}$, which is $k_1$. As $F$ is transitive on $\{1, \cdots, d\}$, we have $[K: \G_{[\gamma^{-1}(x),x]}]=d \cdot [ \G_{e_-}: \G_{[\gamma^{-1}(x),x]}]$. Also as $\G$ has Tits' independence property \cite{BM00a,Amann}  $[\G_{[x, \xi_{+}]}: \G_{[\gamma^{-1}(x), \xi_{+}]}]=k_1 \cdot [ \G_{e_-}: \G_{[\gamma^{-1}(x),x]}]$. We conclude indeed $$[\G_{[\gamma(x), \xi_{+}]}: \G_{[x, \xi_{+}]}]=\frac{[K: \G_{[x, \gamma^{-1}(x)]}] \cdot k_1}{d}$$ and part~\ref{lem::hyp_element_one}) of the lemma is proved. The assertion~\ref{lem::hyp_element_two}) of the lemma goes in the same way. The assertion~\ref{lem::hyp_element_three}) of the lemma is a consequence of assertions~\ref{lem::hyp_element_one}),~\ref{lem::hyp_element_two}) and relation~(\ref{equ::neg_pos}).
\end{proof}

\subsubsection{The evaluation of $gKH \cap KH$}
\label{subsec::evaluation_gKH_cap_KH}
By~\ref{rem::the_strategy_first_step}, the next step is the evaluation of $gKH \cap KH$. This is because we need to integrate the modular function $\Delta_{H}^{-1/2}$ on the intersection $gf_{1}K H\cap f_{2}KH= \sqcup_{i \in I}f_{2}k_{i}H$, for $g,f_1,f_2 \in G$. We are able to evaluate $gKH \cap KH$  for the universal group $\G$ and not in a more general case. This is due to the $KA^{+}K$ decomposition of $\G$ proven in~\ref{lem::KAK_decomposition}, making our task easier. That decomposition might not hold in a more general situation. Using the $KA^{+} K$ decomposition, we only evaluate $gKH \cap KH$ when $g \in A^{+}$. This is given by the next technical proposition. We state the proposition as general as possible, making use of the following general definition. 

\begin{definition}
\label{def::proj_A+}
Let $G$ be a closed subgroup of $\Aut(\T)^{+}$, $x \in \T$ and $e$ be an edge of the star of $x$. Set $K:= G_{x}$ and define
$$A^{+}:=\{\gamma \in G \; \vert \;  e \subset \Min(\gamma), \gamma(e) \subset  \T_{x,e}  \} \cup \{\id\}.$$
Let  $\xi$ be an endpoint in $\bd \T_{x,e}$. 
Define the map $ \proj_{(x,\xi]} : A^{+} \to (x, \xi]$ by $\proj_{(x,\xi]}(g)$ is the vertex or the endpoint $\xi$ with the property $[x, \xi_{g,+}] \cap [x,\xi]=[x, \proj_{(x,\xi]}(g)]$, where $ \xi_{g,+}$ is the attracting endpoint of $g$. As $g \in A^{+}$, note $ \proj_{(x,\xi]}(g)$ is indeed a point  in $(x, \xi]$. Let now $g \in G$ be a hyperbolic element translating the vertex $x$. Consider its $K$--double coset $KgK$ and set $\proj_{(x,\xi]}(KgK):=\max_{g' \in A^{+} \cap KgK}\{\proj_{(x, \xi]}(g')\}$. 
\end{definition}

\begin{proposition}
\label{prop::evaluation_on_A}
Let $G$ be a closed subgroup of $\Aut(\T)^{+}$ and let $\xi \in \partial \T$. Assume $G_{\xi}$ contains hyperbolic elements. Let $H < G_{\xi}$ be a closed subgroup containing also hyperbolic elements. Let $\gamma$ be a minimal hyperbolic element of $H$ given by~\ref{lem::hyp_H}, with attracting endpoint $\xi$, and let $x$ be a vertex of $\Min(\gamma)$. Set $K:=G_{x}$. Choose the edge $e$ in the star of $x$ and define $A^{+}$ such that $\gamma \in A^{+}$. 

Let $g \in A^{+}$. Assume  $\proj_{(x,\xi]}(KgK)=\proj_{(x,\xi]}(g)$. Assume there also exist $k_2 \in K \setminus \{H\cap K\}, k_1 \in K$ and $h \in H$ with $k_1gk_2=h=\gamma^{n}h_0$, where $h_0 \in H\cap G_{\xi}^{0}$ and $n \in \ZZ$.

Then $0 \leq \vert n \vert \leq \frac{\dist_{\T}(x,g(x))}{\vert \gamma \vert} $ and $k_1 \in G_{[x,x_h]}$, where $x_h \in [x, \proj_{(x,\xi]}(g)]$ is with $\dist_{\T}(x, x_h)= \frac{\dist_{\T}(x,g(x))+ \sign(n) \vert \gamma^{n} \vert}{2}$, where $\sign(0)=0$.
\end{proposition}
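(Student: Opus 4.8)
The plan is to push the relation $k_1gk_2=h$ onto the vertex $x$ and then read off the whole statement from the geometry of the two geodesics $[x,g(x)]$ and $[x,h(x)]$, which are interchanged by the single isometry $k_1$. First I would record the distance bookkeeping: since $k_1,k_2\in K=\G_x$ fix $x$, applying $k_1gk_2=h$ to $x$ gives $h(x)=k_1\bigl(g(x)\bigr)$, whence $\dist_\T(x,h(x))=\dist_\T(x,g(x))=:L$, and moreover $L=\vert g\vert$ because $g\in A^{+}$ has $x$ on its axis. Thus $k_1$ is an isometry fixing $x$ that carries $[x,g(x)]$ bijectively onto $[x,h(x)]$, and all the work is to locate these two segments relative to the ray $[x,\xi)$.

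Second, I would pin down the point $x_h$. As $h\in\G_\xi$ fixes $\xi$, let $x_h$ be the median of $x$, $h(x)$ and $\xi$, i.e.\ the vertex where $[x,h(x)]$ leaves the ray $[x,\xi)$. Fixing a Busemann function $b_\xi$ at $\xi$ with $b_\xi(x)=0$, the decomposition $h=\gamma^{n}h_0$ with $h_0\in H\cap\G_\xi^{0}$ (zero Busemann shift, since $H\cap\G_\xi^{0}$ is a countable union of compact groups) and $\gamma$ translating towards $\xi$ by $\vert\gamma\vert$ gives $b_\xi(h(x))=-n\vert\gamma\vert$. Combining $\dist_\T(x,x_h)+\dist_\T(x_h,h(x))=L$ with $\dist_\T(x,x_h)-\dist_\T(x_h,h(x))=n\vert\gamma\vert$ yields
\[
\dist_\T(x,x_h)=\tfrac{1}{2}\bigl(\dist_\T(x,g(x))+n\vert\gamma\vert\bigr)=\frac{\dist_\T(x,g(x))+\sign(n)\vert\gamma^{n}\vert}{2}.
\]
Since $x_h\in[x,h(x)]$ forces $0\le\dist_\T(x,x_h)\le L$, this already delivers the bound $0\le\vert n\vert\le\dist_\T(x,g(x))/\vert\gamma\vert$, so the only remaining points are the location of $x_h$ inside $[x,\proj_{(x,\xi]}(g)]$ and the membership $k_1\in\G_{[x,x_h]}$.

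Third — and this is the step I expect to be the main obstacle — I would prove $x_h\in[x,c]$, where $c:=\proj_{(x,\xi]}(g)$, for this is exactly where the hypothesis $\proj_{(x,\xi]}(KgK)=\proj_{(x,\xi]}(g)$ is needed. The geodesic $[x,h(x)]$ coincides with $[x,\xi)$ precisely along $[x,x_h]$, while $[x,g(x)]$ coincides with $[x,\xi)$ along $[x,c]$ because $g\in A^{+}$ has its axis through $x$ and through the edge $e$. To exploit maximality I would build a competitor in $A^{+}\cap KgK$: assuming $x_h\neq x$ (the case $x_h=x$ makes $k_1\in\G_{\{x\}}=K$ trivially), the segment $[x,h(x)]$ starts along $[x,\xi)$, so its first edge is $e$; feeding $h$ into the proof of the $KA^{+}K$ decomposition (Lemma~\ref{lem::KAK_decomposition}) with the rotation taken to be the identity produces $g^{\ast}\in A^{+}$ with $g^{\ast}(x)=h(x)$ and $\vert g^{\ast}\vert=L$. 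Then $x\in\Min(g^{\ast})$, so the axis of $g^{\ast}$ follows $[x,\xi)$ up to $x_h$ and $\proj_{(x,\xi]}(g^{\ast})=x_h$; moreover $g^{\ast}\in KgK$, since $h\in KgK$ and $g^{\ast}$ differs from $h$ by an element of $K$ on the right. Hence $\proj_{(x,\xi]}(KgK)\ge x_h$, and the maximality assumption forces $\dist_\T(x,x_h)\le\dist_\T(x,c)$, i.e.\ $x_h\in[x,c]$.

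Finally, I would conclude by a rigidity argument. With $x_h\in[x,c]$, the segment $[x,x_h]$ is a common initial subsegment of $[x,g(x)]$ and of $[x,h(x)]$, both lying along $[x,\xi)$. Because $k_1$ maps $[x,g(x)]$ onto $[x,h(x)]$ while fixing $x$, it maps the subsegment $[x,x_h]$ of the source onto the length-$\dist_\T(x,x_h)$ initial subsegment of the target, which is again $[x,x_h]$; as the only isometry of a finite geodesic segment fixing one endpoint is the identity, $k_1$ fixes $[x,x_h]$ pointwise, that is $k_1\in\G_{[x,x_h]}$. Together with the distance formula and the bound on $\vert n\vert$ obtained in the second step, this is precisely the assertion of the proposition.
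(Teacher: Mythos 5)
Your proof is correct, but it takes a genuinely different route from the paper's. The paper argues by an exhaustive case analysis on the position of the point $x_{k_2}$ where the ray $[x,k_2(\xi))$ leaves the axis of $g$ (three main cases, with sub-cases according to whether $h$ is elliptic, hyperbolic with $\xi$ attracting, or hyperbolic with $\xi$ repelling), reading off in each case both $n$ and the segment fixed by $k_1$. You instead (i) extract the distance formula $\dist_{\T}(x,x_h)=\tfrac{1}{2}\left(\dist_{\T}(x,g(x))+n\vert\gamma\vert\right)$ and the bound $\vert n\vert\le \dist_{\T}(x,g(x))/\vert\gamma\vert$ in one stroke from the Busemann homomorphism on $\G_{\xi}$ (which vanishes on $H\cap\G_{\xi}^{0}$ and sends $\gamma$ to $-\vert\gamma\vert$) combined with the median of $x$, $h(x)$, $\xi$; (ii) isolate the only use of the hypothesis $\proj_{(x,\xi]}(KgK)=\proj_{(x,\xi]}(g)$ into a single competitor argument, re-running the $KA^{+}K$ decomposition on $h$ to produce $g^{\ast}\in A^{+}\cap KgK$ whose projection dominates $x_h$; and (iii) deduce $k_1\in\G_{[x,x_h]}$ from the rigidity of an isometry of a segment fixing an endpoint, using that $k_1$ carries $[x,g(x)]$ onto $[x,h(x)]$ and that both segments begin with $[x,x_h]\subset[x,\xi)$. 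This is more uniform and dispenses with the case distinctions; what it gives up is the finer information the paper records along the way (for instance the characterization of when $h$ is elliptic in terms of $x_{k_2}$ being the midpoint of $[x,g^{-1}(x)]$), none of which is needed for the statement. Two minor points to tighten: you only get $x_h\in[x,\proj_{(x,\xi]}(g^{\ast})]$ rather than the equality $\proj_{(x,\xi]}(g^{\ast})=x_h$ you assert (the axis of $g^{\ast}$ may continue along $[x,\xi)$ beyond $h(x)$), but the inequality is all the maximality argument requires; and the hypothesis $k_1,k_2\in K\setminus(H\cap K)$ is never used in your argument, which is harmless since the conclusion holds without it.
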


\begin{proof}
Denote by $e$ the edge of the star of $x$ with $\xi \in \partial \T_{x,e}$. In particular, $A^{+}$ is defined using $x$ and $e$. Let $\xi_{+}$ and $\xi_{-}$ be the attracting and the repelling endpoints of $g$. As $k_2$ is not fixing $\xi$, we denote $x_{k_2}$ the vertex of the geodesic line $(\xi_{-}, \xi)$ with the property $[x, k_{2}(\xi)) \cap (\xi_{-}, \xi)=[x, x_{k_2}]$. We have three cases: either $x_{k_2} \in [x, \xi)$ or $x_{k_2} \in (x, g^{-1}(x))$ or $x_{k_2} \in [g^{-1}(x), \xi_{-})$.

Suppose $x_{k_2} \in [x, \xi)$. Because $k_1gk_2(\xi)=\xi$, $k_1gk_2(e)$ is an edge of $\T_{x,e}$ and the orientation of $k_1gk_2(e)$ induced from $e$ points towards the boundary $ \bd \T_{x,e}$, like $e$. Therefore,  $k_1gk_2 \in A^{+}$. As $k_1gk_2 \in H$, we have $h=k_1gk_2 \in A^{+} \cap H$. As by hypothesis $\proj_{(x,\xi]}(KgK)=\proj_{(x,\xi]}(g)$, we conclude $g \in A^{+} \cap H$. In addition, by \ref{lem::hyp_H} we have $h= \gamma^{n}h_0$, where $h_0 \in K \cap H$; thus $\vert g \vert=\vert h \vert=n \vert  \gamma \vert$. As $k_1gk_2(\xi)=\xi$ and because $g$ is hyperbolic, with attracting endpoint $\xi$ and with $x \in \Min(g)$, $k_1$ must fix at least the vertex $g(x) \in (x, \xi)$. Therefore, $k_1 \in G_{[x, x_h]}$, where $x_h=g(x)$ and the conclusion follows.

Suppose $x_{k_2} \in [g^{-1}(x), \xi_{-})$. Then $gk_2(e)$ is an edge of $\T_{x,e}$ and the orientation of $gk_2(e)$ induced from $e$ points outwards the boundary $\bd T_{x,e}$, thus towards $e$. Because $x_{k_2} \in [g^{-1}(x), \xi_{-})$, by applying $k_1$ to $gk_2$, $k_1(\T_{x,e}) \cap \T_{x,e}=\{x\}$ and the edge $k_1gk_2(e)$ points towards the edge $e$. Therefore $k_1gk_2$ must be a hyperbolic element (of $H$) translating the vertex $x$ outwards the half-tree $ \T_{x,e}$. Consequently, $\xi$ is the repelling endpoint of $k_1gk_2$, as $k_1gk_2(\xi)=\xi$. Otherwise saying, $\xi$ is the attracting endpoint of the hyperbolic element $(k_1gk_2)^{-1}= h^{-1} \in H$ and $x \in \Min(h^{-1})$. We have $\vert h \vert= \vert h^{-1}\vert= \dist_{\T}(x, g(x))=\vert n \vert \vert \gamma \vert$. Although we can say more, we do not impose any restriction for $k_1$, so $k_1 \in G_{[x,x_h]}$ where $x_h=x$. The conclusion of the proposition is still valid in this case. 

Suppose now $x_{k_2} \in (g^{-1}(x),x)$. We claim $ g(x_{k_2}) \in [x,\proj_{(x,\xi]}(g)]$. Indeed, supposing the contrary we have $\proj_{(x,\xi]}(g) \in (x,g(x_{k_2}))$. Then $g(x_{k_2}) \notin [x, \xi)$. As the geodesic ray $[x_{k_2}, k_2(\xi))$ is sent by $g$ into the geodesic ray $[g(x_{k_2}), gk_2(\xi))$,  $[g(x_{k_2}), gk_2(\xi))$ does not intersect $[x, \xi)$. However, by applying $k_1$, we must have $k_1g(x_{k_2}) \in [x, \xi)$, as $k_1gk_2(\xi)=\xi$. This is a contradiction with $\proj_{(x,\xi]}(KgK)=\proj_{(x,\xi]}(g)$ and the claim follows. As $k_1gk_2(\xi)=\xi$, from the latter claim we immediately have $k_1 \in G_{[x, g(x_{k_2})]}$.  From here we deduce the following two facts:
\begin{list}{\arabic{qcounter})~}{\usecounter{qcounter}}
\item 
\label{list::orientation_reversed}
the segment $[x, k_2^{-1}(x_{k_2}))$, where $k_2^{-1}(x_{k_2}) \in (x, \xi)$, is sent by $h=k_1gk_2$ into the segment $(g(x_{k_2}),k_1g(x)] \subset \T_{x,e} \setminus \{[x, \xi)\}$, and the orientation is reversed;
\item 
the edge $k_1gk_2(e)$ belongs to $\T_{x,e}$ and the orientation of $k_1gk_2(e)$ induced from $e$ would point outwards the boundary $\bd T_{x,e}$, thus towards $e$. Therefore, either $k_1gk_2$ is elliptic, or $k_1gk_2$ is hyperbolic in $H$, with translation length strictly smaller than $\dist_{\T}(x, g(x))$. 
\end{list}

Our next claim is $h$ is elliptic if and only if $\dist_{\T}(x,x_{k_2})= \frac{\dist_{\T}(x,g^{-1}(x))}{2}$. Suppose $h=k_1gk_2$ is elliptic. Then by the above fact~\ref{list::orientation_reversed}) we know the segment $h([x, k_2^{-1}(x_{k_2})))$ does not intersect $[x,\xi)$. As $h \in H$ is elliptic, $h$ fixes the midpoint of the segment $[k_2^{-1}(x_{k_2}), h(k_2^{-1}(x_{k_2}))]=[k_2^{-1}(x_{k_2}), g(x_{k_2})]$. We deduce $k_2^{-1}(x_{k_2})=h(k_2^{-1}(x_{k_2}))=g(x_{k_2})$, from where  $\dist_{\T}(x,x_{k_2})= \frac{\dist_{\T}(x,g^{-1}(x))}{2}$. Suppose now $\dist_{\T}(x,x_{k_2})= \frac{\dist_{\T}(x,g^{-1}(x))}{2}$, so we need to prove $h$ is elliptic. Indeed, $k_2^{-1}(x_{k_2})= g(x_{k_2})$. As $k_1 \in G_{[x, g(x_{k_2})]}$, we conclude  $h(k_2^{-1}(x_{k_2}))=k_1gk_2(k_2^{-1}(x_{k_2}))=g(x_{k_2})=k_2^{-1}(x_{k_2})$, so $h$ is elliptic.  The equivalence follows. 

For $h$ elliptic, we resume the following: $k_1gk_2 \in H \cap G_{\xi}^{0}$, so $n=0$, and $k_1 \in G_{[x, x_h]}$, where $x_h:= g(x_{k_2}) \in [x,\proj_{(x,\xi]}(g)]$, with $\dist_{\T}(x,g(x_{k_2})) = \frac{\dist_{\T}(x,g(x))}{2}$. If $h=k_1gk_2$ is hyperbolic, then $\dist_{\T}(x,x_{k_2}) \neq \frac{\dist_{\T}(x,g^{-1}(x))}{2}$. 

Suppose $\dist_{\T}(x,x_{k_2}) < \frac{\dist_{\T}(x,g^{-1}(x))}{2}$, this implies $\frac{\dist_{\T}(x,g(x))}{2} < \dist_{\T}(x,g(x_{k_2})).$ Moreover, using the above fact~\ref{list::orientation_reversed}) and $h$ is hyperbolic fixing $\xi$, we conclude $\xi$ is the attracting endpoint of $h$ and $h$ translates the vertex $k_2^{-1}(x_{k_2}) \in (x, \xi)$ to $k_1g(x_{k_2})= g(x_{k_2}) \in (x, \xi)$. By \ref{lem::hyp_H}, $h= \gamma^{n} h_0$, for some $h_0 \in  H\cap G_{\xi}^{0}$, and $n$ is such that $n \vert  \gamma \vert= \vert h \vert = \dist_{\T}(x, g(x))-2 \dist_{\T}(x, k_2^{-1}(x_{k_2})) < \dist_{\T}(x, g(x))$. In addition, $k_1 \in G_{[x,x_h]}$, where $x_h= g(x_{k_2}) \in [x, \proj_{(x,\xi]}(g)]$ and indeed $\dist_{\T}(x, x_h)= \frac{\dist_{\T}(x,g(x))- \vert \gamma^{n} \vert}{2} + \vert \gamma^{n} \vert$.

Suppose now  $\dist_{\T}(x,x_{k_2}) > \frac{\dist_{\T}(x,g^{-1}(x))}{2}$, this implies $\frac{\dist_{\T}(x,g(x))}{2} > \dist_{\T}(x,g(x_{k_2})).$ 	As before, using the above fact~\ref{list::orientation_reversed}) and $h$ is hyperbolic fixing $\xi$, we conclude $\xi$ must be the repelling endpoint of $h$ and $h^{-1}$ translates the vertex $g(x_{k_2}) \in \Min(h) \cap (x, \xi)$ to $k_2^{-1}(x_{k_2}) \in (x, \xi)$. By \ref{lem::hyp_H}, we have that $h= \gamma^{-n} h_0$, for some $h_0 \in  H\cap G_{\xi}^{0}$, and $n>0$ is such that $n \vert  \gamma \vert= \vert h \vert = \dist_{\T}(x, g(x))-2 \dist_{\T}(x, g(x_{k_2}) ) < \dist_{\T}(x, g(x))$. In addition, $k_1 \in G_{[x,x_h]}$, where $x_h= g(x_{k_2}) \in [x, \proj_{(x,\xi]}(g)]$ and indeed $\dist_{\T}(x, x_h)= \frac{\dist_{\T}(x,g(x))- \vert \gamma^{-n} \vert}{2} $. The proposition is proven.
\end{proof}

When $H$ is unimodular, we obtain the following.
\begin{corollary}
\label{cor::evaluation_on_A_H_unimodular}
Let $G$ be a closed subgroup of $\Aut(\T)^{+}$ and let $\xi \in \partial \T$. Assume $G_{\xi}$ contains hyperbolic elements. Let $H < G_{\xi}$ be a closed, unimodular, subgroup containing also hyperbolic elements. Let $\gamma$ be a minimal hyperbolic element of $H$ given by~\ref{lem::hyp_H}, with attracting endpoint $\xi$, and let $x$ be a vertex of $\Min(\gamma)$. Set $K:=G_{x}$. Choose the edge $e$ in the star of $x$ such that $\gamma \in A^{+}$.

Let $g \in A^{+}$. Assume $\proj_{(x,\xi]}(KgK)=\proj_{(x,\xi]}(g)$. Assume there also exist $k_2 \in K \setminus \{H\cap K\}, k_1 \in K$ and $h \in H$ with $k_1gk_2=h=\gamma^{n}h_0$, where $h_0 \in H\cap G_{\xi}^{0}$ and $n \in \ZZ$. Then $h_{0} \in K \cap H$ and $\vert n \vert= \frac{\dist_{\T}(x,g(x))}{\vert \gamma \vert}$.  If $n>0$ then $k_1 \in G_{[x,x_h]}$, where $x_h \in [x, \xi]$ is with $\dist_{\T}(x, x_h)= \dist_{\T}(x,g(x))$. If $n<0$ then $k_1^{-1} \in kG_{[x,\gamma^{n}(x)]}$, where $k \in K$ with $[x, g(x)]=k([x, \gamma^{n}(x)])$.
\end{corollary}

\begin{proof}
We keep all the notation from the proof of Proposition~\ref{prop::evaluation_on_A}. As $H$ is unimodular, by Lemma~\ref{lem::modular_fct_H} we have $H_x= H_{y}= H_{\xi_{-}}$, for every $y \in \Min(\gamma)$, where $\xi_{-} \in \bd \T$ is the repelling endpoint of $\gamma$. This proves  $h_0 \in K \cap H$ and so $\dist_{\T}(x,h(x))=\dist_{\T}(x,g(x))=\dist_{\T}(x,\gamma^{n}(x))=\vert n \vert \vert \gamma \vert$. If $n>0$, by applying Proposition~\ref{prop::evaluation_on_A}, we directly obtain $k_1 \in G_{[x,x_h]}$, where $x_h \in [x, \proj_{(x,\xi]}(g)]$ is with $\dist_{\T}(x, x_h)= \dist_{\T}(x,g(x))$. It remains the case $n<0$. By the proof of Proposition~\ref{prop::evaluation_on_A}, the case $n <0$ with $\dist_{\T}(x,g(x))=\vert n \vert \vert \gamma \vert$ can occur only when $x_{k_2} \in [g^{-1}(x), \xi_{-})$. Let us compute $g(\xi)= (k_1)^{-1} \gamma^{n}h_0 (k_2)^{-1}(\xi)$. As $x_{k_2} \in [g^{-1}(x), \xi_{-})$, we have $(k_2)^{-1}(\xi) \notin \partial \T_{x,e}$. Then $h_0 (k_2)^{-1}(\xi)$ is still a point in $\{\partial \T \setminus \partial  \T_{x,e}\}$ as $h_0 \in H_{[x, \xi]}$. By applying $\gamma^{n}$ to $h_0 (k_2)^{-1}(\xi)$ and because $n<0$ we have $\gamma^{n}h_0 (k_2)^{-1}(\xi)$ in $\{\partial \T \setminus \partial  \T_{\gamma^{n}(x),\gamma^{n}(e)}\}$. Note $g(\xi) \in \T_{g(x),g(e)} \subsetneq \T_{x,e}$, as $g \in A^{+}$. By applying $k_1$ to $g(\xi)$ we must have $k_1([x, g(x)])=[x, \gamma^{n}(x)]$. We obtain $k_1^{-1} \in k \G_{[x, \gamma^{n}(x)]}$, where $k \in K$ with $[x, g(x)]=k([x, \gamma^{n}(x)])$.

\end{proof}

\subsubsection{The proof}

We are now ready to prove parabolically induced unitary representations on the universal group $\G$, induced from closed subgroups $H \leq \G_{\xi}$  containing hyperbolic elements, are $C_0$. We distinguish two cases: either $H$ is unimodular or $H$ is not unimodular.

\begin{remark}[The strategy: second step]
\label{rem::strategy_second_step}

Let $G$ be a closed subgroup of $\Aut(\T)^{+}$,  $\xi \in \partial \T$ and $H$ be a closed subgroup of $G_{\xi}$ containing hyperbolic elements. Applying~\ref{rem::the_strategy_first_step} it remains to integrate the modular function $\Delta_{H}^{-1/2}$ on the intersection $t_n(f_{1}K H)\cap f_{2}KH= \sqcup_{i \in I_n}f_{2}k_{i,n}H$, for $t_n,f_1,f_2 \in G$. In order to do that, we need to  investigate more closely the set $\{h_{i,n}\}_{i \in I_n}$, given by~\ref{lem::H_left_cosets}. Even if $h_{i,n}$ is uniquely determined by $k_{i,n}$, for every $i \in I_n$, we might still have two $h_{i,n}, h_{j,n}$, with $i \neq j \in I_n$, belonging to the same right coset of $H/(H\cap G_{\xi}^{0})$, thus $\Delta_{H}(h_{i,n})=\Delta_{H}(h_{j,n})$ by~\ref{rem::H_unimodular}. 

The evaluation of the set of all right cosets $[h_{i,n}] \in  H/(H\cap \G_{\xi}^{0})$ follows from~\ref{prop::evaluation_on_A}. Indeed, for simplicity set $g_n:=f_{2}^{-1}t_nf_1$. By~\ref{lem::KAK_decomposition}, one can write $g_n=k\gamma_nk'$, where $k,k' \in K$ and $\gamma_n \in A^{+}$ and there is a liberty to choose such $\gamma_n \in A^{+}$ and $k,k' \in K$.  We can choose $\gamma_n$ with $\proj_{(x,\xi]}(K g_nK)=\proj_{(x,\xi]}(\gamma_n)$. Fix such $\gamma_n, k,k'$ with $g_n=k\gamma_nk'$ and $\proj_{(x,\xi]}(Kg_nK)=\proj_{(x,\xi]}(\gamma_n)$.
\end{remark}

\begin{theorem}
\label{thm::vanishing_unimodular_case_1}

Let $F$ be primitive and let $\xi \in \bd \T$. Let $H$ be a closed, unimodular, subgroup of $\G_{\xi}$, containing hyperbolic elements and let $(\sigma, \mathcal{K})$ be a unitary representation of $H$. Then the induced unitary representation $(\pi_{\sigma}, \mathcal{H_{\sigma}})$ on $\G$ is $C_0$.
\end{theorem}

\begin{proof}
By~\ref{lem::hyp_H}, let $\gamma$ be a minimal hyperbolic element of $H$. Fix for what follows a vertex $x \in \Min(\gamma)$ and set $K:= \G_{x}$. By~\ref{lem::modular_fct_H}, $H_x= H_{y}= H_{[\xi_{-}, \xi]}$, for every $y \in \Min(\gamma)$, where $\xi_{-} \in \bd \T$ is the repelling endpoint of $\gamma$. By~\ref{lem::K-inv_borel_measure} and~\ref{rem::equiv_induced_unit_rep} we can consider, without loss of generality, the rho-function $\rho$ equals the constant function $\mathbf{1}$ on $\G$. In this particular case, the measure $\mu$ on $\G/H$ associated with the rho-function $\mathbf{1}$ on $\G$ is $\G$--invariant.  Apply~\ref{rem::the_strategy_first_step} and then~\ref{rem::strategy_second_step}, keeping all the notation there. By~\ref{lem::H_left_cosets} and~\ref{cor::evaluation_on_A_H_unimodular}, applied to $\gamma_n$, we have, for every $i \in I_{n}$, $ k_{i,n}^{-1}g_nk_{k_{i,n}}= k_{i,n}^{-1}k\gamma_nk'k_{k_{i,n}}=h_{i,n}= \gamma^{m_i}h_{0}$, with $\vert m_i\vert = \frac{\dist_{\T}(x,\gamma_n(x))}{\vert \gamma \vert}$ and $h_0 \in H\cap K$. Evaluate now the solutions for the equation 
\begin{equation}
\label{equ::equation_to_solv_unimod}
k_1 g_n k_2=k_1 k\gamma_nk'k_2 =h,
\end{equation} for a given right coset $ [h] \in \{[h_{i,n}] \; \vert \; i \in I_n\} \subset H/(H\cap \G_{\xi}^{0})$ and where $k_1k \in K$ and $k'k_2 \in K \setminus (K\cap H)$.  Note for any element $h \in H$ satisfying equation~(\ref{equ::equation_to_solv_unimod}) we have 
\begin{equation}
\label{equ::dist_for_h_unimod}
\dist_{\T}(x, h(x))=\dist_{\T}(x, g_n(x))= \dist_{\T}(x, \gamma_n(x))=\vert m \vert \dist_{\T}(x, \gamma(x)),
\end{equation} where $h= \gamma^{m}h_0$, with $h_0 \in H\cap K=H_x$. Apply again~\ref{cor::evaluation_on_A_H_unimodular}.  We obtain for a given right coset $ [h=\gamma^{m}] \in \{[h_{i,n}] \; \vert \; i \in I_n\} \subset H/(H\cap \G_{\xi}^{0})$ we have: (1) If $m>0$ then $k_1^{-1} \in k\G_{[x,x_h]}$, where $x_h \in [x, \xi]$ with $\dist_{\T}(x, x_h)= \dist_{\T}(x,\gamma_n(x))$; (2) If $m<0$ then $k_1^{-1} \in kk_3\G_{[x,\gamma^{m}(x)]}$, where $k_3 \in K$ with $[x, \gamma_n(x)]=k_3([x, \gamma^{m}(x)])$. 

To resume, for a fixed $n >0$ we have:
\begin{equation*}
\label{equ::final_sum_unimod}
\begin{split}
&\int\limits_{\sqcup_{i \in I_{n}}f_{2}k_{i,n}H} \Delta_{H}(h_{i,n})^{-1/2} d\mu(f_{2}k_{i,n}H)= \int\limits_{\sqcup_{i \in I_{n}}f_{2}k_{i,n}H} \mathbf{1} d\mu(f_{2}k_{i,n}H)\\
& \leq \int\limits_{kG_{[x,x_h]}H}  \mathbf{1} d\mu(f_{2}k\G_{[x,x_h]}H)  + \int\limits_{kk_3G_{[x,\gamma^{m}(x)]}H} \mathbf{1} d\mu(f_{2}kk_3\G_{[x,\gamma^{m}(x)]}H)\\
&=\mu(f_{2}k\G_{[x,x_h]}H) + \mu(f_{2}kk_3\G_{[x,\gamma^{m}(x)]}H).\\
\end{split}
\end{equation*}
As $t_n \xrightarrow[n \to\infty]{}  \infty$, we also have $g_n \to \infty$; thus by relation~(\ref{equ::dist_for_h_unimod}) $\dist_{\T}(x, x_h)=\dist_{\T}(x, \gamma_n(x))=\vert m \vert \dist_{\T}(x, \gamma(x)) \to \infty,$ when $n \to \infty$. By hypothesis, $\G_{\xi}, \G_{\xi_{-}}$ are closed, non-compact and proper subgroups of $\G$. By~\ref{lem::infinit_index} applied to $\G_{\xi}, \G_{\xi_{-}}$ we have $[K: \G_{[x, \xi]}] = \infty=[K: \G_{[x, \xi_{-}]}]$. Therefore $[K:G_{[x,\gamma^{m}(x)]}] \xrightarrow[m \to\infty]{} \infty$  and $[K:G_{[x,x_h]}] \to \infty $, when $n \to \infty$. By the $\G$--invariance of $\mu$ and because we have supposed  $\mu(KH)=1$ we claim: 
\begin{equation}
\label{equ::going_zero_hyperbolic}
\begin{split}
&\mu(f_{2}k\G_{[x,x_h]}H) + \mu(f_{2}kk_3\G_{[x,\gamma^{m}(x)]}H) = \mu(\G_{[x,x_h]}H) + \mu(\G_{[x,\gamma^{m}(x)]}H)\\
&= [K:G_{[x,\gamma^{m}(x)]}]^{-1} + [K:G_{[x,x_h]}]^{-1} \xrightarrow[t_n \to\infty]{}  0.
\end{split}
\end{equation}Indeed, we only need to prove if $K = \sqcup_{j}  \; k_j \G_{[x,y]}$, for some $y \in (\xi_{-}, \xi)$, then $KH=\sqcup_{j} \; k_j \G_{[x,y]}H$. Suppose this is not the case, then there exist $j_1 \neq j_2$ with $(k_{j_1} \G_{[x,y]}H )\cap (k_{j_2} \G_{[x,y]}H) \neq \emptyset$. So $k_{j_1}k=k_{j_2}k'h$, for some $k,k' \in \G_{[x,y]} \leq K$ and $h \in H$. Then $h \in K \cap H= H_{[\xi_-, \xi]} \subset \G_{[x,y]}$. Thus $k_{j_1} \G_{[x,y]}=k_{j_2} \G_{[x,y]}$, which is a contradiction. The claim follows. Relation~(\ref{equ::going_zero_hyperbolic}) is a contradiction of our initial assumption $\vert \left\langle \pi_{\sigma}(t_{n})\eta_1, \eta_2 \right\rangle \vert \nrightarrow 0$ and the theorem stands proven.
\end{proof}

\begin{theorem}
\label{thm::vanishing_nonunimodular_case}
Let $F$ be primitive and let $\xi \in \bd \T$. Let $H$ be a closed, non-unimodular, subgroup of $\G_{\xi}$, containing hyperbolic elements and let $(\sigma, \mathcal{K})$ be a unitary representation of $H$. Then the induced unitary representation $(\pi_{\sigma}, \mathcal{H_{\sigma}})$ on $\G$ is $C_0$.
\end{theorem}

\begin{proof}
By~\ref{lem::hyp_H}, let $\gamma$ be a minimal hyperbolic element of $H$. Fix for what follows a vertex $x \in \Min(\gamma)$ and set $K:= \G_{x}$. Apply~\ref{rem::the_strategy_first_step} and then~\ref{rem::strategy_second_step}, keeping all the notation there. By~\ref{lem::H_left_cosets} and~\ref{prop::evaluation_on_A}, applied to $\gamma_n$, we have, for every $i \in I_{n}$, $ k_{i,n}^{-1}g_nk_{k_{i,n}}= k_{i,n}^{-1}k\gamma_nk'k_{k_{i,n}}=h_{i,n}= \gamma^{m_i}h_{0}$, with $0\leq \vert m_i\vert \leq  \frac{\dist_{\T}(x,\gamma_n(x))}{\vert \gamma \vert}$ and $h_0 \in H\cap \G_{\xi}^{0}$. Evaluate now the solutions for the equation 
\begin{equation}
\label{equ::equation_to_solv}
k_1 g_n k_2=k_1 k\gamma_nk'k_2 =h,
\end{equation} for a given right coset $ [h] \in \{[h_{i,n}] \; \vert \; i \in I_n\} \subset H/(H\cap \G_{\xi}^{0})$ and where $k_1k \in K$ and $k'k_2 \in K \setminus (K\cap H)$.  Note for any element $h \in H$, satisfying equation~(\ref{equ::equation_to_solv}), we have 
\begin{equation}
\label{equ::dist_for_h}
\dist_{\T}(x, h(x))=\dist_{\T}(x, g_n(x))= \dist_{\T}(x, \gamma_n(x)).
\end{equation} 

Apply again~\ref{prop::evaluation_on_A}. For a given right coset $ [h=\gamma^{m}] \in \{[h_{i,n}] \; \vert \; i \in I_n\} \subset H/(H\cap \G_{\xi}^{0})$ we have: 1) If $m>0$ then $k_1^{-1} \in k\G_{[x,x_m]}$, where $x_m \in [x, \xi]$ is with $\dist_{\T}(x, x_m)=\frac{ \dist_{\T}(x,g_n(x))+m \vert \gamma \vert}{2}$; 2) If $m=0$ then $k_1^{-1} \in k\G_{[x,x_0]}$, where $x_0 \in [x, \xi]$ is with $\dist_{\T}(x, x_0)=\frac{ \dist_{\T}(x,g_n(x))}{2}$; 3) If $m<0$ then $k_1^{-1} \in k\G_{[x,x_m]}$, where $x_m \in [x, \xi]$ is with $\dist_{\T}(x, x_m)=\frac{ \dist_{\T}(x,g_n(x))- \vert m \vert \cdot \vert \gamma \vert}{2}$. To resume, for a fixed $n >0$ and for $N:=\frac{\dist_{\T}(x,g_n(x))}{\vert \gamma \vert}$ we have: $\int\limits_{\sqcup_{i \in I_{n}}f_{2}k_{i,n}H} \Delta_{H}(h_{i,n})^{-1/2} d\mu(f_{2}k_{i,n}H) \leq$

$ \leq \sum\limits_{m=-N}^{-1} \;  \int\limits_{kG_{[x,x_{m}]}H}  \Delta_{H}(\gamma^{m})^{-1/2} d\mu(f_{2}k\G_{[x,x_m]}H) + \mu(f_{2}k\G_{[x,x_{0}]}H) $

$+ \sum\limits_{m=1}^{N} \;  \int\limits_{kG_{[x,x_{m}]}H}  \Delta_{H}(\gamma^{m})^{-1/2} d\mu(f_{2}k\G_{[x,x_m]}H)= \mu(f_2 k \G_{[x,x_{0}]}H)+$

$+ \sum\limits_{m=-N}^{-1} \Delta_{H}(\gamma)^{-m/2}  \cdot \mu(f_2 k\G_{[x,x_{m}]}H)+ \sum\limits_{m=1}^{N}  \Delta_{H}(\gamma)^{-m/2} \cdot \mu(f_2 k \G_{[x,x_{m}]}H).$

Note by~\ref{rem::radon_nykodym} there is a constant $C_1>0$ depending only on $f_2k$, $K$ and the rho-function of $\mu$, with $\mu(f_2 k \G_{[x,x_{m}]}H) \Delta_{H}(\gamma)^{-m/2}\leq C_1 \mu(\G_{[x,x_{m}]} H) \Delta_{H}(\gamma)^{-m/2}$,  for every $m \in [-N,N]$.  Thus: $\int\limits_{\sqcup_{i \in I_{n}}f_{2}k_{i,n}H} \Delta_{H}(h_{i,n})^{-1/2} d\mu(f_{2}k_{i,n}H) \leq $

$ \leq C_1 \left(\sum\limits_{m=-N}^{-1} \Delta_{H}(\gamma)^{-m/2}  \cdot \mu(\G_{[x,x_{m}]}H)+ \mu( \G_{[x,x_{0}]}H)\right)+$
 
 $ + C_1 \left( \sum\limits_{m=1}^{N}  \Delta_{H}(\gamma)^{-m/2} \cdot \mu( \G_{[x,x_{m}]}H) \right)$
  
$=C_1 \left(\sum\limits_{m=-N}^{-1} \Delta_{H}(\gamma)^{-m/2}  \cdot [K:\G_{[x,x_m]}]^{-1}+ [K:\G_{[x,x_0]}]^{-1}\right)+$ 

$+C_1 \left( \sum\limits_{m=1}^{N}  \Delta_{H}(\gamma)^{-m/2} \cdot [K:\G_{[x,x_m]}]^{-1} \right).$

The last equality follows because $[K:\G_{[x,x_m]}]^{-1}=\mu(\G_{[x,x_{m}]}H)$, for every $m \in \{-N, N\}$. Indeed, we only need to prove if $K = \sqcup_{j}  \; k_j \G_{[x,y]}$, for some $y \in [x, \xi]$, then $KH=\sqcup_{j} \; k_j \G_{[x,y]}H$. Suppose this is not the case, then there exist $j_1 \neq j_2$ with $(k_{j_1} \G_{[x,y]}H )\cap (k_{j_2} \G_{[x,y]}H) \neq \emptyset$. So $k_{j_1}k=k_{j_2}k'h$, for some $k,k' \in \G_{[x,y]} \leq K$ and $h \in H$. Then $h \in K \cap H= H_{[x, \xi]} \subset \G_{[x,y]}$. Thus $k_{j_1} \G_{[x,y]}=k_{j_2} \G_{[x,y]}$, which is a contradiction.  Note as $t_n \xrightarrow[n \to\infty]{}  \infty$, we also have $g_n \to \infty$; thus by (\ref{equ::dist_for_h}) $\dist_{\T}(x, x_0)=\frac{\dist_{\T}(x, g_n(x))}{2} =\frac{\dist_{\T}(x, \gamma_n(x))}{2}  \xrightarrow[n \to\infty]{} \infty.$ By hypothesis, $\G_{\xi}$ is a closed, non-compact and proper subgroup of $\G$. By~\ref{lem::infinit_index} applied to $\G_{\xi}$ we have $[K: \G_{[x, \xi]}] = \infty$. Therefore we must have $[K:\G_{[x,\gamma^{l}(x)]}] \xrightarrow[l \to\infty]{} \infty$  and $[K:\G_{[x,x_0]}] \to \infty $, when $n \to \infty$. Apply~\ref{lem::convergence_zero} below and we will contradict our initial assumption $\vert \left\langle \pi_{\sigma}(t_{n})\eta_1, \eta_2 \right\rangle \vert \nrightarrow 0$ and the theorem stands proven.
\end{proof}

\begin{lemma}
\label{lem::convergence_zero}
Using the same notation as in the proof of~\ref{thm::vanishing_nonunimodular_case}
 we have:
\begin{list}{\arabic{qcounter})~}{\usecounter{qcounter}}
\item
\label{lem::convergence_zero_one} 
$\lim\limits_{N \to \infty} \sum\limits_{m=-N}^{-1} \Delta_{H}(\gamma)^{-m/2}  \cdot [K:\G_{[x,x_m]}]^{-1}=0$
\item
\label{lem::convergence_zero_two} 
$\lim\limits_{N \to \infty}\sum\limits_{m=1}^{N}  \Delta_{H}(\gamma)^{-m/2} \cdot [K:\G_{[x,x_m]}]^{-1}=0.$
\end{list}
\end{lemma}

\begin{proof}
Recall  $N:=\frac{\dist_{\T}(x,g_n(x))}{\vert \gamma \vert}$ and $\frac{\dist_{\T}(x, g_n(x))}{2} =\frac{\dist_{\T}(x, \gamma_n(x))}{2}  \xrightarrow[n \to\infty]{} \infty.$ Moreover, if $m>0$ then $\dist_{\T}(x, x_m)=\frac{ \dist_{\T}(x,g_n(x))+m \vert \gamma \vert}{2}$ and if $m<0$ then $\dist_{\T}(x, x_m)=\frac{ \dist_{\T}(x,g_n(x))- \vert m \vert \cdot \vert \gamma \vert}{2}$. By \ref{lem::modular_fct_H} and the hypothesis $H$ is non-unimodular we have $t:=\Delta_{H}(\gamma)=\frac{1}{[H_{[\gamma(x), \xi]}: H_{[x,\xi]}]} <1$. As $H \leq \G_{\xi} \leq \G$ we also have $[H_{[\gamma^m(x), \xi]}: H_{[x,\xi]}] \leq [\G_{[\gamma^m(x), \xi]}: \G_{[x,\xi]}]$, for every $m \geq 0$. By \ref{lem::hyp_element}, we have $[K: \G_{[x, \gamma^{m}(x)]}]= \frac{[\G_{[\gamma^{m}(x), \xi]}: \G_{[x, \xi]}] \cdot d}{k_1}$, for every $m > 0$, where  $k_1$ is the number of orbits of the edge $e_-$ in $\{1, \cdots,d\}$ under the stabilizer subgroup $F_{e_+} \leq F$. Let $ 0 \leq l(m):=\lfloor m/2 + N/2 \rfloor$ the integer value of $\frac{ \sign(m) \vert \gamma^{m}(x)\vert +\vert \gamma_n(x) \vert}{2 \cdot \vert \gamma \vert}= m/2 + N/2$, for every $m \geq -N$. Thus, for every $m \geq -N$: $[K: \G_{[x, \gamma^{l(m)}(x)}] \leq$
\begin{equation}
\label{equ::bigger}
 \leq [K:\G_{[x,x_m]}]=[K:\G_{[x,\frac{\sign(m) \vert \gamma^{m}(x)\vert +\vert \gamma_n(x) \vert}{2}]}] \leq [K: \G_{[x, \gamma^{l(m)+1}(x)}].
\end{equation}

Let us prove the assertion~\ref{lem::convergence_zero_two}). By \ref{lem::modular_fct_H}, $t^{-m/2}=[H_{[\gamma^m(x), \xi]}: H_{[x,\xi]}]^{1/2}$, for every $m \geq 0$. By \ref{lem::index_fct_H} applied to $\G_{\xi}$, for every $0 \leq m$ we have
\begin{equation}
\label{equ::bigger_square}
[\G_{[\gamma^m(x), \xi]}: \G_{[x,\xi]}] \leq [\G_{[\gamma^{\lfloor m/2 \rfloor +1}(x), \xi]}: \G_{[x,\xi]}]^{2}.
\end{equation}

\medskip
Using (\ref{equ::bigger}) and~(\ref{equ::bigger_square}), the assertion~\ref{lem::convergence_zero_two}) becomes: $\sum\limits_{m=1}^{N}  \Delta_{H}(\gamma)^{-m/2} \cdot [K:\G_{[x,x_m]}]^{-1}$

$= \sum\limits_{m=1}^{N}  [H_{[\gamma^m(x), \xi]}: H_{[x,\xi]}]^{1/2} \cdot [K:\G_{[x,x_m]}]^{-1} \leq \sum\limits_{m=1}^{N} \frac{[\G_{[\gamma^m(x), \xi]}: \G_{[x,\xi]}]^{1/2}}{[K:\G_{[x,x_m]}]} \leq $

$ \leq \sum\limits_{m=1}^{N} \frac{[\G_{[\gamma^{\lfloor m/2 \rfloor +1}(x), \xi]}: \G_{[x,\xi]}]}{[K:\G_{[x,\gamma^{l(m)}(x)}]}=\frac{k_1}{d} \sum\limits_{m=1}^{N} \frac{[\G_{[\gamma^{\lfloor m/2 \rfloor +1}(x), \xi]}: \G_{[x,\xi]}]}{[\G_{[\gamma^{l(m)}(x), \xi]}: \G_{[x,\xi]}]} \leq $

$\leq \frac{k_1}{d} \cdot \frac{N}{[\G_{[\gamma^{\lfloor N/2 \rfloor-1}(x), \xi]}: \G_{[x,\xi]}]}= \frac{k_1}{d} \cdot N \cdot \Delta_{\G_{\xi}}(\gamma)^{\lfloor N/2 \rfloor-1} \xrightarrow[N \to\infty]{} 0.$

\medskip
Using (\ref{equ::bigger}), the assertion~\ref{lem::convergence_zero_one}) becomes: $\lim\limits_{N \to \infty} \sum\limits_{m=-N}^{-1} \Delta_{H}(\gamma)^{-m/2}  \cdot [K:\G_{[x,x_m]}]^{-1}$

$=\lim\limits_{N \to \infty} \sum\limits_{m=-N}^{-1} t^{\vert m \vert /2}  \cdot [K:\G_{[x,x_m]}]^{-1}  \leq \lim\limits_{N \to \infty} \sum\limits_{m=-N}^{-1} t^{\vert m \vert /2}  \cdot [K:\G_{[x,\gamma^{l(m)}(x)]}]^{-1}$

$= \lim\limits_{N \to \infty}\left( t^{N/2} \cdot 1+ \sum\limits_{m=-N+1}^{-1} t^{\vert m \vert /2}  \cdot [K:\G_{[x,\gamma^{l(m)}(x)]}]^{-1}\right) \leq$

$ \leq \lim\limits_{N \to \infty} \sum\limits_{m=-N+1}^{(-N+1)/2} t^{\vert m \vert /2}  \cdot [K:\G_{[x,\gamma^{l(m)}(x)]}]^{-1}+$

$+ \lim\limits_{N \to \infty} \sum\limits_{m=(-N+1)/2}^{-1} t^{\vert m \vert /2}  \cdot [K:\G_{[x,\gamma^{l(m)}(x)]}]^{-1} \leq \lim\limits_{N \to \infty} \sum\limits_{m=-N+1}^{(-N+1)/2} t^{\vert m \vert /2}+$

$ + \lim\limits_{N \to \infty} \sum\limits_{m=(-N+1)/2}^{-1} t^{\vert m \vert /2}  \cdot [K:\G_{[x,\gamma^{l(m)}(x)]}]^{-1} \leq \lim\limits_{N \to \infty} \frac{N+1}{2} \cdot t^{(N-1)/2}+$
 
$+ \lim\limits_{N \to \infty}  \; [K:\G_{[x,\gamma^{l((-N+1)/2)}(x)]}]^{-1} \cdot \sum\limits_{m=(-N+1)/2}^{0} t^{\vert m \vert /2} \; \; .$

As $t<1$, one has $\lim\limits_{N \to \infty} \frac{N+1}{2} \cdot t^{(N-1)/2} =0$. 

Moreover, $\lim\limits_{N \to \infty} [K:\G_{[x,\gamma^{l((-N+1)/2)}(x)]}]^{-1}=0$ and $\sum\limits_{m=(-N+1)/2}^{0} t^{\vert m \vert /2}=\frac{1}{1-t^{1/2}}$. Thus $\lim\limits_{N \to \infty}  \; [K:\G_{[x,\gamma^{l((-N+1)/2)}(x)]}]^{-1} \cdot \sum\limits_{m=(-N+1)/2}^{0} t^{\vert m \vert /2}=0$. 
\end{proof}

\section{The main Theorem}
\ref{cor::vanishing_unimod_case} and~\ref{thm::vanishing_unimodular_case} and~\ref{thm::vanishing_nonunimodular_case} give us the aimed result of this article:

\begin{theorem}
\label{thm::vanishing_general_case}
Let $F$ be primitive and let $\xi \in \bd \T$. Let $H$ be a closed subgroup of $\G_{\xi}$ and let $(\sigma, \mathcal{K})$ be a unitary representation of $H$. Then the induced unitary representation $(\pi_{\sigma}, \mathcal{H_{\sigma}})$ on $\G$ is $C_0$.
\end{theorem}

\begin{proof}
It remains to consider the case when $H$ is a compact subgroup of $\G_{\xi}$. This is a particular case of the well-known general fact that all unitary representations of a locally compact subgroup that are induced from compact subgroups are $C_0$. For the idea of the proof the reader can consult the book of Bekka--de la Harpe--Valette~\cite[Proposition~C.4.6]{BHV}.
\end{proof}

\subsection*{Acknowledgements} We would like to thank Pierre-Emmanuel Caprace for addressing the question if parabolically induced unitary representations of $\G$, with $F$ being primitive, are $C_0$. We thank Pierre-Emmanuel Caprace and Stefaan Vaes for pointing out a gap in an earlier version of this paper and Alain Valette for further discussions. The comments of the anonymous referee were highly appreciated. We would like to thank him/her for carefully reading this paper.

\begin{bibdiv}
\begin{biblist}

\bib{Amann}{thesis}{
author={Amann, Olivier},
 title={Group of tree-automorphisms and their unitary representations},
 note={PhD thesis},
 school={ETH Z\"urich},
 year={2003},
 }
 
 \bib{BHV}{book}{
author = {Bekka, Bachir},
author={de la Harpe, Pierre}, 
 author={Valette, Alain},
title = {Kazhdan's Property (T)},
publisher = {New Mathematical Monographs, Cambridge University Press},
volume={11}
year = {2008},
}

\bib{BM00a}{article}{
   author={Burger, Marc},
   author={Mozes, Shahar},
   title={Groups acting on trees: from local to global structure},
   journal={Inst. Hautes \'Etudes Sci. Publ. Math.},
   number={92},
   date={2000},
   pages={113--150 (2001)},
}


\bib{CaMe11}{article}{
   author={Caprace, Pierre-Emmanuel},
   author={De Medts, Tom},
   title={Simple locally compact groups acting on trees and their germs of
   automorphisms},
   journal={Transform. Groups},
   volume={16},
   date={2011},
   number={2},
   pages={375--411},
   issn={1083-4362},
}


\bib{CCL+}{article}{
   author={Cluckers, Raf},
   author={de Cornulier, Yves},
   author={Louvet, Nicolas},
   author={Tessera, Romain},
   author={Valette, Alain},
   title={The Howe-Moore property for real and $p$-adic groups},
   journal={Math. Scand.},
   volume={109},
   date={2011},
   number={2},
   pages={201--224},
   issn={0025-5521},
   review={\MR{2854688 (2012m:22008)}},
}

\bib{Cio}{article}{
   author={Ciobotaru, Corina},
   title={A unified proof of the Howe--Moore property},
   journal={Journal of Lie Theory},
   volume={25},
   date={2015},
   pages={65--89},
   issn={1083-4362},
 note={arXiv:1403.0223},
   }

 
 \bib{Cio_rel}{unpublished}{
author={Ciobotaru, Corina},
 title={The relative Howe--Moore property for the universal group $U(F)^{+}$},
 note={arXiv:1612.09427},
}

\bib{Cio_par}{unpublished}{
author={Ciobotaru, Corina},
 title={Parabolically induced unitary representations of the universal group $U(F)^+$ are $C_0$},
 note={Long version: arXiv:1409.2245v2},
}


\bib{Ti70}{article}{
   author={Tits, Jacques},
   title={Sur le groupe des automorphismes d'un arbre},
   language={French},
   conference={
      title={Essays on topology and related topics (M\'emoires d\'edi\'es
      \`a Georges de Rham)},
   },
   book={
      publisher={Springer},
      place={New York},
   },
   date={1970},
   pages={188--211},
}

\end{biblist}
\end{bibdiv}

\end{document}